\documentclass[12pt]{amsart} 
\usepackage{amscd}
\usepackage{amssymb}
\usepackage{a4wide}
\usepackage{amstext}
\usepackage{amsthm}
\usepackage{xcolor}
%\usepackage{mathrsfs}
%\usepackage{latexsym}
%\usepackage{graphicx}
%\usepackage{epsfig}
%\usepackage{epstopdf}
%\usepackage{showkeys}
%\DeclareGraphicsRule{.tif}{png}{.png}{`convert #1 `dirname
%#1`/`basename #1 .tif`.png}
\usepackage{xcolor}
\linespread{1.2}
\numberwithin{equation}{section}

\newcommand{\Z}{\mathbb{Z}}

\newcommand{\N}{\mathbb{N}}
\newcommand{\R}{\mathbb{R}}

\newcommand{\Cm}{\mathbb{C}}
\newcommand{\Hi}{\mathcal{H}}

\newcommand{\summ}{\sum\limits}
\newcommand{\pho}{\rho}

\newcommand{\eps}{\varepsilon}

 \DeclareMathOperator{\dist}{dist}

\renewcommand{\phi}{\varphi}

\newcommand{\vep}{\varepsilon}
\newcommand{\co}{\mathbb{C}}

\newcommand{\ZZ}{\mathcal{Z}}

\newtheorem{Thm}{Theorem}[section]
\newtheorem{theorem}[Thm]{Theorem}
\newtheorem*{example}{Example}
\newtheorem{lemma}[Thm]{Lemma}
\newtheorem{proposition}[Thm]{Proposition}

\newtheorem{remark}[Thm]{Remark}

\newtheorem*{question}{Question}

\textheight=20.8truecm

\begin{document}
\sloppy
\title[Spectral synthesis for exponentials and logarithmic length]
{Spectral synthesis for exponentials \\ and logarithmic length}
\author{Anton Baranov}
\address{Anton Baranov,
\newline St.~Petersburg State University, St. Petersburg, Russia,
\newline {\tt anton.d.baranov@gmail.com} }
\author{Yurii Belov}
\address{Yurii Belov,
\newline Department of Mathematics and Computer Science, St.~Petersburg State University, St. Petersburg, Russia,
\newline {\tt j\_b\_juri\_belov@mail.ru} }
\author{Aleksei Kulikov}
\address{Aleksei Kulikov,
\newline Department of Mathematical Sciences, Norwegian University of Science and Technology, NO-7491 Trondheim, Norway and
\newline St.~Petersburg State University, St. Petersburg, Russia,
\newline {\tt lyosha.kulikov@mail.ru} }

%\thanks{2010 {\it Mathematics Subject Classification:} Primary 30D15; Secondary 30E20, 31A15.}
%\thanks{{\it Key words and phrases:} growth of entire functions, harmonic measure, extremal length}
\thanks{ The results of Sections 2 and 3 were obtained with the support of Russian Science Foundation grant 19-71-30002.
The results of Sections 4 and 5 were obtained with the support of Russian Foundation for Basic Research
grant 20-51-14001-ANF-a.}

\begin{abstract} {We study hereditary completeness  of systems of exponentials on an interval such
that the corresponding generating function $G$ is small outside of a lacunary sequence 
of intervals $I_k$. We show that, under some technical conditions, 
an exponential system is hereditarily complete if and only if the logarithmic length of the union of these intervals 
 is infinite, i.e., $\sum_k\int_{I_k} \frac{dx}{1+|x|}=\infty$. }
\end{abstract}

\maketitle

\section{Introduction}

Let $\{v_n\}_{n\in \N}$ be a complete and minimal system of vectors in a separable Hilbert 
space $\Hi$, that is, $\overline {\rm Span} \{v_n\} = \Hi$ and 
$\overline {\rm Span} \{v_n\}_{n\ne m}\ne \Hi$ for any $m$. 
For any such sequence there exists its unique biorthogonal system $\{w_n\}_{n\in \N}$ such 
that $(v_n, w_m) = \delta_{nm}$. In general, the system $\{w_n\}_{n\in \N}$ needs not be complete 
(e.g., consider $v_n = e_1 + e_{n+1}$ in $\Hi= \ell^2(\N)$), but even if it is complete,  
it is possible that for some partition $\N = A \cup B$, $A\cap B=\emptyset$, the ``mixed''
system $\{v_n\}_{n\in A} \cup \{ w_n\}_{n\in B}$ is incomplete. If it is not the case 
for any partition $\N = A\cup B$, then we call the system $\{v_n\}_{n\in \N}$ {\it hereditarily complete}. 
Hereditary completeness can be understood as a weakest form of reconstruction 
of a vector $f$ from its generalized Fourier series $$\summ_{n\in\mathbb{N}} (f, w_n) v_n,$$ since it is equivalent 
to the fact that each vector $f\in \Hi$ can be approximated by linear combinations of the partial sums 
of its Fourier series. Clearly, if the Fourier series with respect to the biorthogonal pair $(v_n, w_n)$
admit a linear summation method, then the system $\{v_n\}$ is hereditarily complete.

Hereditarily complete systems are also known as strong $M$-bases or systems which admit {\it spectral synthesis} 
due to the relation of this property to the structure of invariant subspaces for certain
classes of linear operators discovered by A.~Markus \cite{markus}. 
Various geometrical aspects of abstract hereditary complete systems 
were considered in \cite{nik1, nik2} while in \cite{arg, kat} some interesting relations
with operator algebras can be found. 

\subsection{Exponential systems} We are interested in the case when $\Hi = L^2(-\pi, \pi)$ and $v_n$ is a system of exponentials, 
$v_n = e^{i\lambda_n t}$ for some set $\Lambda = \{ \lambda_n\}_{n\in \N}\subset \Cm$. 
R.~Young \cite{young} proved that 
in this case the biorthogonal system is always complete and there has been a number of papers 
establishing hereditary completeness and existence of a linear summation method
for nonharmonic Fourier series under some additional hypothesis about the set $\Lambda$
(see, e.g., \cite{bl} or \cite{sedl, sedl1}). Nevertheless, in \cite{bbb1} an example was constructed 
which shows that in general hereditary completeness for exponential systems { does not necessarily hold}. 
This result was extended to other functional systems (reproducing kernels in de Branges 
spaces of entire functions, Gaussian Gabor systems) in \cite{bbb2, bbb3} 
(see also a survey paper \cite{bb-proc}). It should be mentioned that the synthesis 
for exponential systems fails with one-dimensional defect only:  
each mixed system has codimension at most one \cite{bbb1}. 

The construction in \cite{bbb1} was ingenious, but it had very few ``degrees of freedom'',
i.e. free parameters. Therefore, the structure of such examples remained rather mysterious. 
Our aim is to give a larger class of examples. Moreover, under some regularity conditions
we are able to arrive to a certain qualitative characterization ({\it finite logarithmic length}) 
which we believe is intrinsic for the phenomenon of nonhereditary completeness of 
exponential systems. 

It is well-known that if $\{e^{i\lambda t}\}_{\lambda\in\Lambda}$ is a complete and minimal system, then
the following canonical product converges in the sense of principal value, see, e.g., \cite[Lecture 18, Theorem 4]{lev},
$$G(z)=p.v.\prod_{\lambda\in\Lambda}\biggl{(}1-\frac{z}{\lambda}\biggr{)}=\lim_{R\rightarrow\infty}\prod_{\lambda\in\Lambda, |\lambda|<R}\biggl{(}1-\frac{z}{\lambda}\biggr{)}.$$
The function $G$ is called {\it the generating function} of the system $\{e^{i\lambda t}\}_{\lambda\in\Lambda}$. Numerous properties of exponential systems can be expressed in terms of $G$, see, e.g., \cite{bl, pav}.

\subsection{Logarithmic length} We are interested in the case when the function $G$ is small outside some {\it lacunary} 
sequence of intervals 
\begin{equation}
\label{inter}
I_k = [\pho_k - d_k, \pho_k + d_k], \qquad
2\pho_k\le\pho_{k+1}, \quad d_k \le 0.1\pho_k.
\end{equation}
%It turns out that hereditary completeness 
%of the  system $\{v_n\}$ depends only on the set of these intervals and not on how 
%large function $G$ is on them: 
We prove that under some additional restrictions the system 
of exponentials (reproducing kernels) is hereditarily complete 
if and only if the total logarithmic length of these intervals is infinite, that is,
$$
\sum_{k = 1}^\infty \frac{d_k}{\pho_k} = \infty,
$$ 
see Theorems \ref{main1}, \ref{main2}.

{ To illustrate this we present one example which immediately follows from Theorems \ref{main1} and \ref{main2}.
\begin{example} Let $\{\lambda_n\}=\Lambda$ be a localy dense real sequence, i.e., $\sup_n|\lambda_{n+1}-\lambda_n|<\infty$ such that the
generating function $G$ is of exponential type $\pi$, and 
\begin{equation}
\frac{|G(x)|}{\dist(x,\Lambda)}\asymp \max_k{\frac{1}{\sqrt{I_k}(\dist^2(x,I_k)+1)}},
\label{gest}
\end{equation}
where $\{I_k\}$ is a lacunary system of intervals satisfying \eqref{inter} and $d_k\slash \rho_k < 1\slash k$. Then the system $\{e^{i\lambda t}\}_{\lambda\in\Lambda}$ is hereditarily complete if and only if $\sum_{k = 1}^\infty \frac{d_k}{\pho_k} = \infty$.
\end{example}

The existence of sequences $\Lambda$ satisfying \eqref{gest} can be deduced via standard atomization technique, see, e.g., \cite{bel}. From \cite[Lecture 18, Theorem 4]{lev} it follows that the system  $\{e^{i\lambda t}\}_{\lambda\in\Lambda}$ is always complete and minimal in $L^2(-\pi,\pi)$.
}

In Section \ref{exa} we apply these results to give an example of a nonhereditarily complete
(i.e., complete and minimal but not hereditarily complete) 
exponential system which partially answers a problem posed in \cite{bb-proc}: which perturbations of integers 
{can produce} complete and minimal systems of exponentials which are not hereditarily complete?
Let $\lambda_n \in \mathbb{R}$ and
\begin{equation}
\label{kad}
\delta = \sup_{n\in\mathbb{Z}} |\lambda_n - n|.
\end{equation}
By the results of Kadets and Ingham any sequence with $\delta<1/4$  generates a Riesz basis
of exponentials (see, e.g., \cite[Part D, Chapter 4]{nk}). One can ask, however, for which $\delta$ 
any complete and minimal system $\{e^{i\lambda_n t}\}$ satisfying \eqref{kad} \
is automatically hereditarily complete. 

{ \begin{question} Find $\delta_{crit}$ which is the infimum of $\delta>0$ such that there exists nonhereditarily complete system $\{e^{i\lambda_n t}\}$
with $|\lambda_n-n|<\delta$.
\end{question}

The exact value of the {\it synthesis constant} $\delta_{crit}$ is not known. Theorem \ref{exam} shows that
such $\delta_{crit}$ cannot exceed $1/2$. Therefore,
$$\frac{1}{4}\leq \delta_{crit}\leq \frac{1}{2}.$$ 
}

\subsection{Paley-Wiener space\label{pwss}} The classical approach to the study of the properties of exponential systems is to 
consider Fourier transform $\mathcal{F}$ of our system: in this case the Hilbert space becomes 
the Paley--Wiener space $PW_{\pi} = \mathcal{F} L^2(-\pi, \pi)$ 
(the space of all entire functions of exponential type at most $\pi$ which belong to $L^2(\mathbb{R})$)
and the functions $e^{i\lambda_n t}$ are mapped to the reproducing kernels (cardinal sines)
$$
K_{\lambda_n}(z) = \frac{\sin \pi(z-\lambda_n)}{\pi(z-\lambda_n)}
$$
corresponding  to the points $\bar \lambda_n$. In the case when the exponential system is complete and minimal, 
its biorthogonal system $\{w_n\}$ is mapped to the functions 
$G_n\in PW_\pi$ which vanish on $\{\lambda_m, m\ne n\}$. It is 
easy to see that 
$$
G_n(z) = \frac{G(z)}{G'(\lambda_n)(z-\lambda_n)}
$$ 
{ where $G$ is {the generating function} of the system 
$\{e^{i\lambda_n t}\}$ (or, simply put, of the set $\Lambda$).}
The function $G$ vanishes on $\Lambda$ and has no other zeros, it is of exponential type $\pi$ 
(with the diagram $[-\pi i, \pi i]$). Clearly, $G\notin PW_\pi$, however 
$G\in L^2(\R, \frac{dx}{1+x^2})$. Thus, the spectral synthesis problem for exponentials
is equivalent to the same  problem for systems of reproducing kernels
in  the Paley--Wiener spaces. { This equivalence will be frequently used.}

\subsection*{Organization of the paper}In Section \ref{s1} we give a sufficient condition
for an exponential system to be nonhereditarily complete (Theorem \ref{main1}, 
the case of finite logarithmic length). 
In Section \ref{exa} we apply this result to give an explicit example 
of a nonhereditarily complete system of exponentials whose frequencies are 
sufficiently small perturbations of integers. In Section \ref{s3} we prove a converse result
(Theorem \ref{main2})
establishing hereditary completeness in the case of infinite logarithmic length. 
Finally, in Section \ref{s4}, we show that for an incomplete mixed system its exponential part 
must be always sufficiently irregular and, in particular, cannot be { a part of }  Riesz basis of
exponentials with some additional regularity. 
\bigskip

%%%%%%%%%%%%%%%%%%%%%%%%%%%%%%%%%%%%%%%%%%%%%%%%%%%%%%%%%%%%%%%%%
%%%%%%%%%%%%%%%%%%%%%%%%%%%%%%%%%%%%%%%%%%%%%%%%%%%%%%%%%%%%%%%%%

\section{Case of finite logarithmic length}
\label{s1}

%Firstly, we note that to prove non-hereditary completeness  of the system $\{ v_n\}$ it is enough for some splitting $\N = A\cup B$ to construct two vectors $f, g\in \Hi$ such that $g\perp \{w_n\}_{n\in B}$, $f\perp \{v_n\}_{n\in A}$ but $f$ and $g$ are not orthogonal. Indeed, if the system $\{v_n\}_{n\in A} \cup \{ w_n\}_{n\in B}$ was complete, then the vector $g$ would lie in the span of $\{ v_n\}_{n\in A}$ and $f$ would lie in the span of $\{ w_n\}_{n\in B}$ and so they would be orthogonal.

%We will construct function $f$ as a small perturbation of the function $G$ so that they will share most of their zeros and therefore $f$ is orthogonal to the most $K_{\lambda_n}$ and then construct function $g$ by the fixed point theorem such that it is orthogonal to the remaining functions $g_n$.

In what follows we will need a number of assumptions on $\Lambda$ and $G$:
\begin{enumerate}
\item[(a)] $\Lambda \subset \R$ and $G(\R) \subset \R$; 
\item[(b)] $\dist (\Z, \Lambda) > 0$; 
\item[(c)] $\Lambda$ is locally dense, i.e., 
there exists some $C > 0$ such that any interval $I\subset \R$, $|I| \ge C$, contains 
at least one element of $\Lambda$; 
\item[(d)] $|G(iy)| = o(e^{\pi |y|}), |y|\to \infty$. 
\end{enumerate}

Note that the series $\sum_{n\in\mathbb{Z}}|G(n)|^2$ diverges for any generating function of a complete and minimal system satisfying condition (d). Otherwise it is easy to show that $G$ belongs to the Paley--Wiener space which contradicts completeness.

For our scheme to work, on the most part of $\Z \backslash \bigcup I_k$ we just need
$G(n)$ to be $\ell^2$-sequence. But in some neighborhood of $I_k$ we want $G(n)$ to be slightly 
better than $\ell^2$ (see condition (ii) below). 

\begin{theorem}
\label{main1}            
Let $G$ be the generating function of some complete and minimal system 
$\{e^{i\lambda t}\}_{\lambda\in \Lambda}$ satisfying \textup(a\textup)--\textup(d\textup).
Let $I_k$ be a system of intervals of the form \eqref{inter} such that
$\dist(\pho_k, \Z)\ge \frac{1}{3}$ for each $k$ and $G(\pho_k) \ne 0$.
Put $g_k = \summ_{n\in I_k} G^2(n)$, $s_k = \sqrt{d_kg_k\pho_k}$ and 
$$
J_k = J_k^- \cup J_k^+ = [\pho_k -d_k- 2s_k, \pho_k - d_k-s_k]\cup [ \pho_k + d_k + s_k, \pho_k + d_k + 2s_k].
$$ 
Assume that the function $G$ satisfies the following conditions\textup:

\begin{enumerate}
\item $\{ G(n): \  n\in \Z \backslash \bigcup\limits_k I_k\} \in \ell^2$\textup;

\item $\summ_k s_k\summ_{n\in J_k} G^2(n) < \infty$\textup;

\item $s_k \le 0.1 \pho_k$\textup;

\item $\summ_k \dfrac{d_k}{\pho_k} < \infty$.
\end{enumerate}
Then the system $\{e^{i\lambda t}\}_{\lambda\in \Lambda}$ is not hereditarily complete.
\end{theorem}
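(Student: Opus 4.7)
The plan is to exhibit, via the Paley--Wiener identification from Subsection \ref{pwss}, a partition $\Lambda = A \sqcup B$ and a non-zero function $h \in PW_\pi$ that is orthogonal to $\{K_\lambda\}_{\lambda\in A}\cup\{G_\mu\}_{\mu\in B}$, thereby witnessing the failure of hereditary completeness. For each $k$, I would pick a point $\mu_k \in \Lambda \cap I_k$ close to $\pho_k$ -- possible by (b), (c) and the assumption $\dist(\pho_k,\Z)\ge 1/3$ -- and set $B=\{\mu_k\}_{k\ge1}$, $A=\Lambda\setminus B$. Any $h\in PW_\pi$ vanishing on $A$ has $h/G$ meromorphic with simple poles only at the $\mu_k$, so it is natural to look for $h$ in the form
\[
h(z)=G(z)\sum_k\frac{c_k}{z-\mu_k},
\]
where $(c_k)$ are to be determined and the sum converges pointwise off $\{\mu_k\}$ once $(c_k)$ is bounded, thanks to the lacunarity of $\{\mu_k\}$ and $\sum 1/\mu_k<\infty$.

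The orthogonality $\langle h, G_{\mu_j}\rangle=0$ translates, using $G_{\mu_j}(z)=G(z)/(G'(\mu_j)(z-\mu_j))$ and dominated convergence, into the infinite linear system
\[
\sum_k c_k\, a_{kj}=0,\qquad a_{kj}=\int_\R\frac{G^2(x)\,dx}{(x-\mu_k)(x-\mu_j)}, \qquad j\ge 1.
\]
The diagonal entries $a_{jj}=(G'(\mu_j))^2\|G_{\mu_j}\|^2$ dominate the off-diagonal ones, which decay thanks to the concentration of $G^2$ on $\bigcup I_k$ and the lacunarity \eqref{inter}. The finite logarithmic length condition (iv), $\sum d_k/\pho_k<\infty$, is the precise quantitative input that allows one to solve this system with a non-trivial sequence $(c_k)$ for which, crucially, the induced $h$ does not lie in the $L^2$-closed span of $\{G_{\mu_k}\}$ -- otherwise, being orthogonal to each summand, it would vanish identically.

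The $L^2$-bound on $h$ comes from the Plancherel identity $\|h\|^2_{L^2}=\sum_{n\in\Z}|h(n)|^2$ valid for $h\in PW_\pi$, applied after splitting $\Z$ into three regions: $\Z\setminus\bigcup_k(I_k\cup J_k)$, handled by condition (i); $\Z\cap I_k$, handled using $g_k$ together with the lower bound $|n-\mu_k|\ge\dist(\Z,\Lambda)>0$ from (b); and $\Z\cap J_k$, where (ii) supplies the summability of $\sum_k s_k\sum_{n\in J_k}G^2(n)$. The choice $s_k=\sqrt{d_k g_k\pho_k}$ is engineered precisely to balance these three contributions, and (iii) keeps $I_k\cup J_k\subset[\pho_k/2,2\pho_k]$ so the dyadic structure is preserved. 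Exponential type $\pi$ of $h$ is inherited from $G$ via (d), and non-vanishing of $h$ follows from the non-triviality of $(c_k)$ and the prescribed pole structure of $h/G$.

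I expect the principal obstacle to be the coupling between the two requirements: one must produce $(c_k)$ that simultaneously annihilates the system $\sum_k c_k a_{kj}=0$ and keeps $\|h\|_{L^2}$ finite, while also lying outside the $\ell^2$-weighted space in which the matrix $(a_{kj})$ acts as an injective operator. The finite logarithmic length hypothesis is the sharp quantitative condition under which these demands are compatible; in its absence any solution would be square-summable in the weight $\{a_{jj}\}$ and the construction would collapse to $h\equiv0$.
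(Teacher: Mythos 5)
Your plan is to exhibit the annihilator of the mixed system directly, in the form $h(z)=G(z)\sum_k c_k/(z-\mu_k)$ with poles at points $\mu_k\in\Lambda\cap I_k$, and to find $(c_k)$ by solving the homogeneous Gram system $\sum_k c_k a_{kj}=0$. The central step of this plan is missing, and the heuristic you invoke actually works against you. The matrix $a_{kj}=\langle G/(\cdot-\mu_k),G/(\cdot-\mu_j)\rangle_{L^2}$ is a Gram matrix, so in any sequence space on which it acts as a bounded operator, a solution of the homogeneous system pairs with itself to give $\bigl\|\sum_k c_k G/(\cdot-\mu_k)\bigr\|_{L^2}^2=0$; diagonal dominance (which you cite as the favourable feature) only reinforces injectivity and hence triviality. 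You acknowledge that the desired $(c_k)$ must live outside this framework, but you give no mechanism for producing it, and none of the hypotheses (ii)--(iv) is actually wired into your ansatz: your poles sit inside $I_k$, so the intervals $J_k$, the quantity $s_k=\sqrt{d_kg_k\rho_k}$ and condition (ii) never enter your estimates, and condition (iv) is invoked only as a slogan. Two further concrete problems: nothing in the hypotheses guarantees $\Lambda\cap I_k\neq\emptyset$ (condition (c) only gives a point of $\Lambda$ in every interval of length $\ge C$, while $d_k$ may be small), and the term-by-term evaluation $\langle h,G_{\mu_j}\rangle=\sum_k c_k a_{kj}$ by ``dominated convergence'' is precisely what fails when the series does not converge in $L^2$ -- which, as just noted, it must not if $h\neq0$.

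The paper avoids constructing the annihilator at all. It uses the two-function criterion: it suffices to produce $f\perp\{K_\lambda\}_{\lambda\in\Lambda_1}$ and $g\perp\{G_\lambda\}_{\lambda\in\Lambda_2}$ with $(f,g)\neq0$. The function $f=Gm$, $m(z)=\prod_k\frac{1-z/\rho_k}{1-z/t_k}$, has its poles not in $I_k$ but at zeros $t_k$ of $G$ chosen in $\tfrac12 J_k^{\pm}$ by an averaging argument, and acquires new zeros at the centres $\rho_k$; this is exactly what makes $|f(n)|\lesssim |G(n)|d_k/s_k$ on $I_k$, so that $\sum_{n\in I_k}|f(n)|^2\lesssim d_k/\rho_k$ (condition (iv)), while condition (ii) controls the contribution of $J_k$. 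The second function $g=\frac{\sin\pi z}{\pi}\sum_n b_n/(z-n)$ is obtained from a contraction-mapping argument in the Banach space with norm $\sup_k|c_k|/d_k$, imposing one linear condition per scale, $S(\rho_k)=0$, rather than a full Gram system; the orthogonality to $G/(z-t_k)$ then follows from a residue identity and a Phragm\'en--Lindel\"of argument, and the smallness of $\sum_k d_k/\rho_k$ is what makes the linear part of the fixed-point operator contractive and keeps $(f,g)=\sum_n a_nb_n\neq0$. If you want to salvage a direct construction of the annihilator, you would at minimum have to move your poles to the $J_k$'s, add zeros at the $\rho_k$'s, and replace the homogeneous Gram system by an argument of this perturbative type; as written, the proposal does not contain a route to the existence of a non-trivial $(c_k)$.
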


\begin{proof}
First, we note that to prove that the system $\{v_n\} = \{e^{i\lambda_n t}\}$  
is not hereditarily complete
it is enough for some splitting $\N = A\cup B$ to construct two vectors $f, g\in \Hi$ 
such that $g\perp \{w_n\}_{n\in B}$, $f\perp \{v_n\}_{n\in A}$ but $f$ and $g$ are not 
orthogonal. Indeed, if the system $\{v_n\}_{n\in A} \cup \{ w_n\}_{n\in B}$ were complete, 
then the vector $g$ would lie in the span of $\{ v_n\}_{n\in A}$ and $f$ would lie in the 
span of $\{ w_n\}_{n\in B}$ and so they would be orthogonal.
\medskip
\\
{\bf Step 1. Construction of $f$.} We will construct a function $f$ as a small perturbation 
of the function $G$ so that 
they will share most of their zeros and therefore $f$ is orthogonal to the most 
of $K_{\lambda_n}$, and then construct function $g$ by the fixed point theorem so that
it is orthogonal to the remaining functions $G_n$.

Without loss of generality we  can throw away those $k$ for which $d_k \ge 0.1 s_k$, that 
is, $g_k\le 0.01 \frac{d_k}{\pho_k}$. Note that since the series $\summ_{k = 1}^\infty \frac{d_k}{\pho_k}$ 
is convergent, all our assumptions are still satisfied. For the remaining 
$k$ it is easy to see that all intervals $I_k, J_k$ are pairwise disjoint. Also later we 
will throw away some extra finite set of intervals -- it also will not break any of our assumptions.

Let $t_k\in J_k$ be a zero of $G$ whose choice will be specified later. Put 
$$
f(z) = G(z)m(z),
$$ 
where 
$$
m(z) = \prod\limits_{k = 1}^\infty \frac{1 - z/\pho_k}{1 - z/t_k}. 
$$
It is easy to see from the lacunarity of $\pho_k$ that this product converges locally 
uniformly on $\Cm \backslash \{t_k\}$, and since $G(t_k) = 0$ we conclude that the function $f$ is entire.
\medskip
\\
{\bf Step 2. $f \in PW_\pi$.} 
We will select two candidates $t_k^{\pm} \in \frac{1}{2}J_k^{\pm}$ for $t_k$ (as always, 
by the half of the interval we mean the interval with the same center and twice smaller 
length). Then we choose one of them in such a way that $0.001\le\prod\limits_{k = 1}^N \
\frac{t_k}{\pho_k}\le 1000$ for all $N$ (we can always do that by (iii)). Note that in this case  
we have 
\begin{equation}
\label{rain}
|m(z)| \asymp \bigg| \frac{z-\pho_k}{z-t_k} \bigg|, \qquad
\frac{\pho_{k-1} + \pho_k}{2} \le |z| \le \frac{\pho_k + \pho_{k+1}}{2}.
\end{equation}
Therefore the function $f$ is 
of exponential type at most $\pi$ and $|f(iy)| = o(e^{\pi |y|})$. Thus, to 
prove that $f\in PW_\pi$, it is enough to show that $\{f(n)\} \in \ell^2(\Z)$.

Trivially, $\{f(n)\} \in \ell^2(\Z \backslash \bigcup\limits_k (I_k \cup J_k))$ since 
$|m(n)|\lesssim 1$ for those $n$. For $n\in I_k$ we have $\frac{|x-\pho_k|}{|x-t_k|} 
\le \frac{d_k}{s_k}$ and, therefore, 
\begin{equation*}
\summ_{k = 1}^\infty \summ\limits_{n\in I_k} |f(n)|^2 \lesssim 
\summ_{k = 1}^\infty \frac{d_k^2}{s_k^2}g_k = \summ_{k = 1}^\infty \frac{d_k}{\pho_k} < \infty.
\end{equation*}

For $n\in J_k$ we have $|m(n)| \asymp \frac{s_k}{n - t_k}$. Let us divide 
$\frac{1}{2}J_k^{\pm}$ into $r_k\asymp s_k$ intervals of length $C$ and 
choose one root of $G$ from each of them (there is always at least one by 
our assumption (iii)). Denote these roots in $J_k^+$ by $\lambda_j$, $j=1, \dots l$.
Then 
$$
\sum_{j=1}^l \sum_{n\in J_k} \frac{s_k^2 G^2(n)}{(n-\lambda_j)^2}  \lesssim
s_k^2 \sum_{n\in J_k} G^2(n),
$$
whence there exists $\lambda_j$ such that 
$$
\sum_{n\in J_k} |f(n)|^2 \lesssim   
\sum_{n\in J_k} \frac{s_k^2 G^2(n)}{(n-\lambda_j)^2} \lesssim s_k  \sum_{n\in J_k} G^2(n)
$$
with constants in $\lesssim$ independent on $k$. Similarly, one can choose $t_k^- \in J_k^-$. 
By condition (ii), $\summ_k \summ_{n\in J_k} |f(n)|^2 <\infty$ both for
$t_k = t_k^+$ and $t_k = t_k^-$.
\medskip
\\
{\bf Step 3. Construction of the function $g$.}
Put $a_n = (-1)^nf(n)$. We are going to construct a real sequence  $\{ b_n\}\in \ell^2(\Z)$ such 
that $\summ_{n\in \Z}a_nb_n\ne 0$ and the function $S(z) = \summ_{n\in \Z} \frac{a_nb_n}{z-n}$ 
has zeros at each $\pho_k$. 

Let us show that once such system $\{b_n\}$ is constructed, 
the functions $f(x)$ and 
$$
g(z) = \frac{\sin \pi z}{\pi} \summ_{n\in \Z} \frac{b_n}{z-n} = 
\summ_{n\in \Z} (-1)^n b_n K_n(z)
$$ 
will achieve our goals. By construction, $f$ is orthogonal to all 
$K_{\lambda_n}$ except for $t_k$ and $(f, g) = \summ_{n\in \Z} a_nb_n \ne 0$. 
It remains to prove that $g$ is orthogonal to $\frac{G(z)}{z-t_k}$ for all $k$
whence the mixed system 
$$
\{K_{\lambda}\}_{\lambda\in \Lambda_1} \cup \big\{\frac{G(z)}{z-\lambda} \big\}_{\lambda\in \Lambda_2}
$$
with $\Lambda_2 = \{t_k\}_{k\ge 1}$, $\Lambda_1 = \Lambda \setminus\Lambda_2$,
is incomplete. Thus, we need to prove that
\begin{equation}
\label{G g orth}
\Big(\frac{G(z)}{z-t_k}, g\Big)= \summ_{n\in\Z} \frac{(-1)^n b_n G(n)}{n - t_k}=0.
\end{equation}

We are going to prove that 
$$
\frac{G(z)S(z)}{f(z)} = \summ_{n\in \Z} \frac{(-1)^n b_n G(n)}{z-n}.
$$ 
If we do so, then substituting $z = t_k$ we get \eqref{G g orth} (note that $f(t_k) \ne 0$).
Consider the function 
$$
H(z) = \frac{G(z)S(z)}{f(z)} - \summ_{n\in \Z} \frac{(-1)^n b_nG(n)}{z-n}.
$$ 
Trivial computation shows that its residues at $\Z$ are zero and since $GS$ vanish in all zeros of $f$
we conclude that $H$ is entire. On the other hand, by comparing indicator 
diagrams of corresponding functions we see that $H$ is of minimal exponential type. 
Finally, from $|m(iy)|\asymp 1, y\to \infty$ and the definitions of $f$ and $S$ we 
see that $|H(iy)| = o(1), |y|\to \infty$. Therefore, by Phragm\'en--Lindel\"of principle, $H \equiv 0$.

It remains to construct a sequence $\{b_n\}_{n\in \Z}$ with the desired properties.
\medskip
\\
{\bf Step 4. Construction of the sequence $\{b_n\}$.}
Put $b_0 = \frac{1}{a_0}$, $b_n = 0$ for $n\in \mathbb \Z \backslash \bigcup I_k, n\ne 0$,
and $b_n = c_k \frac{a_n}{\pho_n - n}$ for $n\in I_k$ for some $c_k$. We want to construct 
a sequence $c_k$ such that $S(\pho_k) = 0$ for all $k$.

Consider the Banach space $\mathcal{B}$ of sequences $\{c_k\}_{k\ge 1}$ 
with the norm $\|c\|_{\mathcal{B}} = \sup\limits_{k\ge 1} \frac{|c_k|}{d_k}$, 
and consider the following operator on it:
$$
\begin{aligned}
(Tc)_k & = \bigg(\summ_{n\in I_k} \frac{a_k^2}{(\pho_k - n)^2}\bigg)^{-1} 
\bigg( -\frac{1}{\pho_k} - \summ_{m\ne k}\summ_{n\in I_m} \frac{a_nb_n}{\pho_k - n}\bigg) \\
& = 
\bigg(\summ_{n\in I_k} \frac{a_k^2}{(\pho_k - n)^2}\bigg)^{-1} 
\bigg( -\frac{1}{\pho_k} - \summ_{m\ne k}c_m \summ_{n\in I_m} \frac{a_n^2}{(\pho_k - n)(\pho_m - n)}\bigg).
\end{aligned}
$$
It is easy to see that if $\{c_k\}$ is a fixed point of this operator then $S(\pho_k) = 0$ 
for all $k$. Thus it remains to prove that $T$ is contractive.

Recall that $(-1)^n a_n = f(n) = G(n)/m(n)$. We have
\begin{equation}
\label{denom}
\summ_{n\in I_k} \frac{a_k^2}{(\pho_k - n)^2} = 
\summ_{n\in I_k} \frac{G^2(n)}{m^2(n)(\pho_k -n)^2} \asymp \summ_{n\in I_k} 
\frac{G^2(n)}{(t_k - n)^2} \asymp \frac{g_k}{s_k^2} \asymp \frac{1}{d_k\pho_k}.
\end{equation}
On the other hand, by \eqref{rain},
\begin{equation*}
\summ_{n\in I_m} \frac{a_n^2}{|(\pho_k - n)(\pho_m - n)|} 
\lesssim \frac{1}{\pho_k} \summ_{n\in I_m} \frac{a_n^2}{|\pho_m - n|} 
\lesssim \frac{1}{\pho_k} \summ_{n\in I_m} \frac{G^2(n)|\pho_m - n|}{|t_m - n|^2} \le \frac{g_md_m}{\pho_k s_m^2} = \frac{1}{\pho_k \pho_m}.
\end{equation*}
Clearly, the operator $T$ is the sum of a constant vector and some linear operator. 
Moreover, by \eqref{denom}, this vector is in $\mathcal{B}$. So it remains to prove that the linear part 
of $T$ is contractive. Denoting it by $T_{lin}$ we have
\begin{equation*}
\frac{|(T_{lin} c)_k|}{d_k} \lesssim \frac{d_k \pho_k}{d_k} \summ_{m\ne k} 
\frac{|c_m|}{\pho_k\pho_m} \le \|c\|_{\mathcal{B}} \summ_{m}\frac{d_m}{\pho_m}.
\end{equation*}
As we mentioned in the beginning of the proof, we can safely throw away any finite number 
of intervals. Thus, we can assume that $\summ_m \frac{d_m}{\pho_m}$ is as small as 
we like and so $\|T_{lin}(c)\|_{\mathcal{B}} 
\le \frac{\|c\|_{\mathcal{B}}}{2}$. Therefore, $T$ has a fixed point $c$.

It remains to prove that $b_n\in \ell^2(\Z)$ and $\summ_{n\in \Z} a_nb_n \ne 0$. We have,
by \eqref{denom},
\begin{equation*}
\summ_{n\in \Z} |b_n|^2 = |b_0|^2 + \summ_k |c_k|^2 \summ_{n\in I_k} 
\frac{a_n^2}{(\pho_k - n)^2} \lesssim |b_0|^2 + 
\|c\|_{\mathcal{B}} \summ_k \frac{d_k}{\pho_k} < \infty.
\end{equation*}
and, again using \eqref{denom},
\begin{equation*}
\summ_n a_nb_n = 1 + \summ_k c_k \summ_{n\in I_k} \frac{a_n^2}{\pho_k - n} \ge 1 - \|c\|^2_{\mathcal{B}}
\summ_k d_k \summ_{n\in I_k} \frac{a_n^2}{|\pho_k - n|} \ge 1 - A \|c\|^2_{\mathcal{B}} 
\summ_k \frac{d_k}{\pho_k}
\end{equation*}
for some constant $A$ depending only on $\dist (\Z, \Lambda)$ from (b). 
We can once again throw away some intervals $I_k$ so that the expression in the right-hand side be 
positive (note that since $\|T_{lin}\|\le 1/2$, 
we can give uniform upper bound for $\|c\|_{\mathcal{B}}$ so it is enough to make 
$\summ_k \frac{d_k}{\pho_k}$ sufficiently small).
\end{proof}

\begin{remark}
{\rm We can replace condition (iii) by $s_k \le C\pho_k$ -- just replace $s_k$ with 
$\eps s_k$ for sufficiently small $\eps$.}
\end{remark}
{ 
\begin{remark}
{\rm Note that in the proof we actually need only that a locally dense subset of the zeros of $G$ has a positive distance from $\Z$.}
\end{remark}

We will now use the Theorem \ref{main1} to construct a completely explicit example of a function $G$ which gives us a nonhereditarily complete system.

\begin{theorem}
Let $\Lambda$ be the set of zeros of the function
\begin{equation}\label{simple example}
G(x) = \cos \pi x \left (\frac{1}{x-1/2} +  \summ_{k = 10}^\infty \bigg(\frac{1}{x-2^k+1/2} - \frac{1}{x-2^k-1/2}\bigg)\right).
\end{equation}
Then the system $\{ e^{i\lambda t}\}_{\lambda\in \Lambda}$ is not hereditarily complete.
\end{theorem}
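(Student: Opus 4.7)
The plan is to verify the hypotheses of Theorem \ref{main1} for the $G$ in \eqref{simple example}, with the choice $\rho_k = 2^k - 1/3$ and $d_k = k$. First I would analyze the structure: writing $G(x) = \cos(\pi x) F(x)$ with
\[F(z) = \frac{1}{z-1/2} - \sum_{k \geq 10} \frac{1}{(z - 2^k)^2 - 1/4},\]
the poles of $F$ at the half-integers $\tfrac12$ and $2^k \pm \tfrac12$ are cancelled by zeros of $\cos(\pi z)$, so $G$ is entire. The bound $|F(iy)| = O(1/|y|)$ combined with $|\cos(i\pi y)| \asymp e^{\pi|y|}/2$ gives exponential type $\pi$ together with $|G(iy)| = o(e^{\pi|y|})$, which is condition (d). A decomposition of $\R$ into the bumps near $2^k$ (each contributing $\asymp 1$ to $\|G\|_2^2$) and the gaps between them (where $|F(x)| \asymp 1/|x|$, contributing $\asymp 2^{-k}$) shows $G \in L^2(\R, dx/(1+x^2)) \setminus L^2(\R)$, so $G \notin PW_\pi$ and by \cite[Lecture 18]{lev} the system $\{e^{i\lambda t}\}_{\lambda \in \Lambda}$ is complete and minimal with $G$ as generating function.

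Next I would verify (a)--(c). Sign analysis shows $F$ is negative near both ends of each gap $(2^k + 1/2, 2^{k+1} - 1/2)$ and positive in its middle, yielding two zeros per gap near $2^k \pm 2^{k/2}$, which can in fact be arbitrarily close to $\Z$. However, the Remark following Theorem \ref{main1} reduces (b) to the existence of a locally dense subset of $\Lambda$ at positive distance from $\Z$, and the uncancelled half-integers $\{(2m+1)/2 : m \in \Z\} \setminus \{\pm \tfrac12, \pm(2^k \pm \tfrac12)\}$ form exactly such a subset, simultaneously handling (c). For (a), the reality of the zeros of $F$ should follow by a Rouch\'e/perturbation argument exploiting the lacunarity $\rho_{k+1}/\rho_k = 2$: on a well-chosen disc around $2^k$, $F$ is a perturbation of order $O(4^{-k})$ of the localized rational function $F_{\mathrm{loc},k}(z) = \frac{1}{z-1/2} - \frac{1}{(z - 2^k)^2 - 1/4}$, whose zeros are roots of the quadratic $z^2 - (2^{k+1}+1)z + 4^k + 1/4$ and hence explicitly real.

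Finally I would verify the four quantitative hypotheses. The distance $\dist(\rho_k, \Z) = 1/3$, the lacunarity $2\rho_k \leq \rho_{k+1}$, and $G(\rho_k) = \tfrac12 F(2^k - 1/3) \approx 18/5 \neq 0$ are all immediate. For integers $n = 2^k + j \in I_k$, the estimate $|G(n)| = |F(n)| \asymp |j^2 - 1/4|^{-1}$ gives $g_k \asymp 1$ uniformly in $k$, hence $s_k \asymp \sqrt{k \cdot 2^k}$; then (iii) reduces to $\sqrt{k} \lesssim 2^{k/2}$ (which holds for large $k$, with finitely many small $k$ discarded), and (iv) is $\sum_k k/2^k < \infty$. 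For (i), integers outside $\bigcup I_k$ split into gap points (where $|F(n)| \lesssim 1/|n|$ yields a convergent $\sum n^{-2}$) and bump-tail points $n = 2^k + j$ with $|j| > k$ (where $|F(n)| \lesssim j^{-2}$ yields $\sum_k k^{-3} < \infty$). For (ii), every $n \in J_k$ has $|n - 2^k| \asymp s_k$, so $F(n)^2 \lesssim s_k^{-4}$ and $\sum_{n \in J_k} G(n)^2 \lesssim s_k^{-3}$, giving $\sum_k s_k \cdot s_k^{-3} = \sum_k s_k^{-2} \asymp \sum_k (k \cdot 2^k)^{-1} < \infty$. Theorem \ref{main1} then delivers non-hereditary completeness. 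The main obstacle is the rigorous justification that $F$ has no non-real zeros, since $F = F_+ - F_-$ is not Herglotz (it has mixed-sign residues); this requires a careful Rouch\'e comparison with $F_{\mathrm{loc},k}$, whereas all other estimates reduce to elementary computations once the pole structure of $F$ is understood.
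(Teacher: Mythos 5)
Your overall route is the paper's: verify the hypotheses of Theorem \ref{main1} for \eqref{simple example} with a lacunary system of intervals centered near $2^k$, use the remark after Theorem \ref{main1} to handle (b)--(c) via the uncancelled half-integer zeros of $\cos\pi z$, and compute $g_k\asymp1$, $s_k=\sqrt{d_kg_k\rho_k}$. (Your $\rho_k=2^k-1/3$ in fact respects the hypothesis $\dist(\rho_k,\Z)\ge\frac13$ more carefully than the paper's choice $\rho_k=2^k$, and $d_k=k$ instead of $2^{k/5}$ is immaterial; your Rouch\'e analysis of the reality of the zeros is extra detail the paper leaves as an unproved parenthetical.) However, there is a genuine gap at the very first step. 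You conclude that $\{e^{i\lambda t}\}_{\lambda\in\Lambda}$ is complete and minimal with generating function $G$ from the two facts $G\in L^2(\R,\frac{dx}{1+x^2})$ and $G\notin L^2(\R)$, citing Levin. This only excludes \emph{constant} multiples of $G$ from $PW_\pi$; completeness requires that $GT\notin PW_\pi$ for \emph{every} nonzero entire $T$ (any $f\in PW_\pi$ vanishing on $\Lambda$ factors as $GT$), and since $|G(x)|$ decays like $1/|x|$ in the gaps this is not automatic -- one must genuinely rule out polynomial and zero-type multiples. The paper supplies exactly this step: it proves $|G(z)|\gtrsim |z|^{-1}e^{\pi|{\rm Im}\, z|}$ on the circles $|z|=2^k+2^{k-1}$, which by the maximum principle forces any admissible $T$ to be a polynomial of degree at most one, and then the fact that $|G(n)|\asymp1$ for $n$ near $2^k$ (your own computation $g_k\asymp1$) kills these. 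You have the ingredients but never assemble this argument, and the appeal to Levin presupposes it.

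A second, repairable, error occurs in your verification of (ii). For $n\in J_k$ one has $|n-2^k|\asymp s_k\asymp\sqrt{k2^k}$, so the bump term of $F$ is $\asymp (k2^k)^{-1}$, but the term $\frac{1}{n-1/2}$ contributes $\asymp 2^{-k}$, which \emph{dominates}; hence $|G(n)|\asymp 2^{-k}$, and your claimed bound $F(n)^2\lesssim s_k^{-4}$ is false. Condition (ii) still holds with the corrected estimate,
\[
\sum_k s_k\sum_{n\in J_k}G^2(n)\ \lesssim\ \sum_k s_k\cdot s_k\,4^{-k}\ \asymp\ \sum_k k\,2^{-k}<\infty,
\]
which is precisely the second term $\frac{k^2s_k}{4^k}$ that the paper keeps (via the bound \eqref{boundex}) and that you dropped. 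With the completeness argument added and this estimate corrected, your proof goes through along the same lines as the paper's.
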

\begin{proof}

It is easy to see that $G \notin PW_ \pi$, but $G \in PW_\pi + zPW_\pi$. Moreover, $|G(z)| \gtrsim |z|^{-1}e^{\pi |{\rm Im} z|}$ for $|z| = 2^k + 2^{k -1}$ and $k \in \mathbb{N}$ is sufficiently large. Therefore, one cannot multiply $G$ by an entire function and remain in $PW_\pi$. By \cite[Lecture 18, Theorem 4]{lev} $G$ is the generating function of some complete and minimal system of exponentials.

Put $\pho_k = 2^k$ and $d_k = \pho_k^{1/5}$. Conditions $(a)-(d)$ and $(iii)$, $(iv)$ are easy to verify 
(for conditions $(b), (c)$ see the remark above). Let us now verify conditions $(i)$ and $(ii)$. We begin with the condition $(i)$.

For $|n|\in [\frac{\pho_{k-1} + \pho_k}{2}, \frac{\pho_k + \pho_{k+1}}{2}]$ we have from \eqref{simple example}
\begin{equation}\label{boundex}
|G(n)| \lesssim  \frac{1}{|n-\pho_k|^2} + \frac{1}{2^k} + \frac{k}{4^k} \lesssim \frac{1}{|n-\pho_k|^2} + \frac{k}{2^k}.
\end{equation}
Therefore 
\begin{equation*}
\sum_{n\notin \cup I_k} G^2(n) \lesssim \sum_{k=1}^\infty \bigg( \frac{1}{d_k^3} + \frac{k^2}{2^k}\bigg) < \infty.
\end{equation*}

To prove $(ii)$ note that $g_k =  \summ_{n\in I_k} G^2(n) \asymp 1$ and so $s_k = \pho_k^{3/5}$. By the bound \eqref{boundex} we get
\begin{equation}
\sum_{k = 1}^\infty s_k \sum_{n\in J_k} G^2(n) \lesssim \sum_{k = 1}^\infty s_k \bigg(\frac{1}{s_k^3} + \frac{k^2s_k}{4^k}\bigg) < \infty.
\end{equation}
\end{proof}

Note that the minus sign in \eqref{simple example} is absolutely essential to verify the condition $(ii)$, similar cancelation can be observed implicitly in the example from  \cite{bbb1}. Moreover, if we enumerate zeros of the function $G$ in increasing order then for all $n\in \Z$  we would have (after shifting by $\frac{1}{2}$) $|\lambda_n - n| \leq 1$ just as in \cite{bbb1} (one can check that all the roots of the function $G$ are real). The drawback of these examples is that they do not use the full potential of the Theorem \ref{main1} -- we could have chosen $d_k$ as any positive power of $\pho_k$ and the analysis would still work. In the following section we will construct a more advanced example which will break this barier and give us $|\lambda_n - n| < \frac{1}{2} + \eps$.
}
\bigskip
%%%%%%%%%%%%%%%%%%%%%%%%%%%%%%%%%%%%%%%%%%%%%%%%%%%%%%%%%%%%%%%%%
%%%%%%%%%%%%%%%%%%%%%%%%%%%%%%%%%%%%%%%%%%%%%%%%%%%%%%%%%%%%%%%%%

\section{Example of a nonhereditarily complete sequence}
\label{exa}

In this section we give an explicit example of a sequence $\Lambda$ satisfying conditions of Theorem
\ref{main1}. Moreover, this system will be a sufficiently small perturbation of integers. 

\begin{theorem}
\label{exam}
For any $\delta > \frac{1}{2}$ there exist $G$ and $\Lambda$ satisfying all 
conditions of Theorem \ref{main1} and such that $\Lambda$ satisfies \eqref{kad}.
\end{theorem}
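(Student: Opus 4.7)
My plan is to refine the explicit construction of the preceding theorem so that the zero set $\Lambda$ sits within any prescribed $\delta>1/2$ of the integers. The ansatz remains $G(x)=\cos(\pi x)\cdot H(x)$ with $H=\sum_k R_k$ a sum of real rational blocks, one per lacunary center $\rho_k$; however, instead of two poles per block as in the simpler example, I would use of order $d_k$ poles placed at half-integers inside $I_k$, giving enough free parameters to force the zeros of $R_k$ close to integers. Take $\rho_k$ lacunary (say $\rho_k = 2^k$, shifted by $1/2$ if necessary so the relevant half-integers lie inside $I_k$) and $d_k$ a small positive power of $\rho_k$, exactly as in the preceding theorem.

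The key construction step is designing each $R_k$. A natural candidate has the form
\[
R_k(x)=\sum_{j=-d_k}^{d_k}\frac{a_{k,j}}{x-\rho_k-j},
\]
with the weights $a_{k,j}$ satisfying a collection of vanishing-moment identities $\sum_j a_{k,j} j^m = 0$ for $m=0,1,\dots,M-1$, which guarantee polynomial decay of $R_k$ outside $I_k$. The remaining freedom among the weights is used to force the interior zeros of $R_k$ into prescribed $\epsilon$-neighborhoods of integers, where $\epsilon = \delta - 1/2$. I expect such a choice to exist by a degree-theoretic or fixed-point argument, anchored by a symmetric reference configuration (for instance, alternating-sign weights proportional to a discrete bump shape), with the actual zero placement obtained by a small perturbative adjustment.

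The verification of the hypotheses of Theorem \ref{main1} is then essentially parallel to the preceding example. Conditions (a)--(d) and (iii), (iv) are immediate from the construction and the choice of $\rho_k$, $d_k$. Conditions (i) and (ii) reduce to estimates on $R_k$ far from $I_k$: the fast decay produced by the vanishing-moment constraints, combined with a suitable normalization of the weights $a_{k,j}$, yields $\ell^2$-type summability of $G$ on the integers outside $\bigcup_k I_k$ and the required $J_k$-band sum. To obtain $|\lambda_n-n|<\delta$, I would observe that outside $\bigcup_k I_k$ the zeros of $G$ are exactly the half-integer zeros of $\cos\pi x$ (at distance $1/2$ from integers), while inside each $I_k$ the zeros of $G$ are either surviving half-integer zeros of $\cos\pi x$ or the prescribed near-integer zeros of $R_k$; after enumerating $\Lambda$ in increasing order, the total count matches that of $\mathbb{Z}$ and each $\lambda_n$ is within $\delta$ of $n$.

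The main obstacle is the joint optimization in the construction of $R_k$: one needs weights $\{a_{k,j}\}$ that simultaneously satisfy the vanishing-moment constraints (needed for decay, and hence for conditions (i), (ii)), achieve the prescribed placement of the $\asymp d_k$ interior zeros, and are uniformly controlled in $k$ so that the estimates do not blow up. The budget is plausible: the linear moment constraints consume a bounded number of the $2d_k+1$ dimensions of weight space, leaving $\asymp d_k$ free parameters to locate the same number of interior zeros. The delicate point is to show, quantitatively, that the admissible region contains a configuration placing all zeros within $\epsilon$ of the integer lattice, and I expect this to reduce to a perturbation analysis around an explicit symmetric reference weight.
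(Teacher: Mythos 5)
Your proposal is a genuinely different route from the paper's, but its central step is not established, and there are two concrete obstructions it does not confront. First, the existence of weights $a_{k,j}$ that simultaneously (a) satisfy the vanishing-moment constraints, (b) place all interior zeros of $R_k$ within $\epsilon$ of the integers, and (c) are uniformly controlled in $k$, is only asserted (``I expect such a choice to exist''); this is precisely the whole difficulty, and it interacts badly with the zero count. Each vanishing moment drops the degree of the numerator of $R_k$ by one, so a block with $2d_k+1$ poles and $M$ vanishing moments contributes only about $2d_k-M$ zeros, while it destroys all $2d_k+1$ half-integer zeros of $\cos\pi x$ in $I_k$. Hence your claim that ``the total count matches that of $\mathbb{Z}$'' is unjustified: the missing zeros must be compensated by zeros of $H$ in the gaps, whose positions you do not control (in the two-pole example of Section 2 they sit at distance $\asymp\sqrt{\rho_k}$ from the block edge), and an index shift of $j\ge 2$ propagating through a region where the zeros are pinned at half-integers produces $|\lambda_n-n|\ge 3/2$. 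This is exactly the barrier the paper points out after the explicit example of Section 2 (which with two poles per block and one cancellation only achieves $|\lambda_n-n|\le 1$), and the $\cos\pi x\cdot H$ ansatz inherits it; you would need to control both the number and the fractional position of the compensating gap zeros, which your scheme leaves untouched. Second, condition (i) of Theorem \ref{main1} is in tension with having $\asymp d_k$ weights of unit size: just outside $I_k$, at distances $r$ with $d_k\lesssim r\lesssim s_k$, the multipole expansion gives $|R_k|\sim |\mu_M|/r^{M+1}$ with $|\mu_M|$ as large as $d_k^{M+1}$, so the $\ell^2$-sum of $G(n)$ over this annulus is of order $d_k$ per block and diverges over $k$ unless you impose much stronger internal cancellation -- which again deepens the counting deficit. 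So the ``budget'' argument (a bounded number of moment constraints versus $\asymp d_k$ free parameters) does not capture the actual constraints.

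For comparison, the paper avoids all of this by abandoning the $\cos\pi x\cdot H$ structure: it starts from $G_0(z)=(z-1/2)\prod_{n\in\N}\bigl(1-\frac{z^2}{(n+\delta_0)^2}\bigr)$ with $\frac12\le\delta_0<\delta$, whose zeros are already within $\delta_0$ of the integers and which satisfies $|G_0(x)|\asymp(|x|+1)^{-2\delta_0}\dist(x,\mathcal{Z}_{G_0})$, and then \emph{moves} the zeros $n+\delta_0$ with $n\in I_k'$ to $n-\delta$. This keeps every zero within $\delta$ of its integer and does not change the zero count at all, so \eqref{kad} is immediate; the multiplicative effect of the shift is computed exactly (estimate \eqref{gg}), and the calibration $d_k^{\delta_0+\delta}=\rho_k^{2\delta_0}$ makes $|G(n)|\asymp 1$ at the right edge of $I_k$ (so $G$ remains a generating function of a complete and minimal system) while conditions (i)--(iv) follow from the explicit two-sided bound, the threshold $\delta>\delta_0\ge\frac12$ appearing in the convergence of the sum in (ii). If you want to salvage your approach, the missing ingredients are a proof of the simultaneous weight selection, a quantitative bookkeeping of the per-block zero deficit and of the location of the compensating zeros of $H$, and near-field decay estimates outside $I_k$ compatible with condition (i); as it stands, each of these is a genuine gap.
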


\begin{proof}                                                      
We will start with the following auxiliary function. Let $\delta_0 \in [1/2, 1)$. Consider
the function 
$$
G_0(z) = (z-1/2) \prod_{n\in\N} \bigg(1-\frac{z^2}{(n+\delta_0)^2}\bigg). 
$$
It is well known that
$$
|G_0(x)| \asymp (|x|+1)^{-2\delta_0} {\rm dist} (x, \mathcal{Z}_{G_0})
$$
(here and in what follows we denote by $\mathcal{Z}_F$ the zero set of an entire function $F$), whence 
$G_0 \in PW_\pi$ and, in particular, $|G_0(iy)| = o(e^{\pi |y|}), |y|\to \infty$. 

Now let $\delta\in (\delta_0, 1)$. The idea is to shift a part of the zeros of $G_0$ which belong
to some lacunary sequence of intervals $I'_k = [\rho_k - d_k, \rho_k + d_k]$ back to the origin. 
Let $\rho_k$ be an arbitrary sequence such that $\rho_{k+1}>2\rho_k$ and choose $d_k$ so that
\begin{equation}
\label{dd}
d_k^{\delta_0+\delta} = \rho_k^{2\delta_0}.
\end{equation}
Of course, we may assume that $d_k\le  \rho_k/100$ for all $k$. Now put
$$
G(z) = G_0(z) \prod_k\prod_{n\in I'_k} \frac{z- (n-\delta)}{z-(n+\delta_0)} = 
G_0(z) \prod_k\prod_{n\in I'_k}\bigg(1 + \frac{\delta+\delta_0}{z-(n+\delta_0)}\bigg).
$$

{
Let $x\in \big[\frac{\rho_{k-1} +\rho_k}{2}, \frac{\rho_{k} +\rho_{k+1}}{2}\big]$. Then
$$
\sum_{m\ne k} \sum_{n\in I'_m} \frac{1}{|x-(n+\delta_0)|} \lesssim \frac{1}{\rho_k} \sum_{m<k} d_m + \sum_{m>k} \frac{d_m}{\rho_m}.
$$
Since $\sum_k \frac{d_k}{\rho_k} <\infty$, the product converges and, moreover, 
$$
\bigg|\prod_{m\ne k} \prod_{n\in I'_m}\bigg(1 + \frac{\delta+\delta_0}{x-(n+\delta_0)} \bigg) \bigg| \asymp
1, \qquad x\in \Big[\frac{\rho_{k-1} +\rho_k}{2}, \frac{\rho_{k} +\rho_{k+1}}{2} \Big].
$$
Also, let $n_0+\delta_0$ and $n_1-\delta$, $n_0, n_1 \in I_k'$, be respectively the zeros of $G_0$ and $G$ closest to $x$.
Then
$$
\begin{aligned}
\log \bigg| \prod_{n\in I'_k}\bigg(1 + \frac{\delta+\delta_0}{x-(n+\delta_0)} \bigg) \bigg| & = 
\log\frac{|x-(n_1-\delta)|}{|x-(n_0+\delta_0)|} + \sum_{n\in I_k', n\ne n_0, n_1} \frac{\delta+\delta_0}{x-(n+\delta_0)} + O(1) \\
& =
\log\frac{|x-(n_1-\delta)|}{|x-(n_0+\delta_0)|} + (\delta+\delta_0) \ln \frac{|x-(\rho_k-d_k)|+1}{|x-(\rho_k+d_k)|+1} + O(1).
\end{aligned}
$$
Thus, for $x\in \big[\frac{\rho_{k-1} +\rho_k}{2}, \frac{\rho_{k} +\rho_{k+1}}{2}\big]$, we have 
\begin{equation}
\label{gg}
|G(x)| \asymp |G_0(x)|\cdot \frac{{\rm dist}(x, \mathcal{Z}_G)}
{{\rm dist}(x, \mathcal{Z}_{G_0})}\cdot
\bigg(\frac{|x-(\rho_k-d_k)|+1}{|x-(\rho_k+d_k)|+1}\bigg)^{\delta_0+\delta}.
\end{equation}  }
We will show that $G$ satisfies all conditions of Theorem \ref{main1}
with $I_k = [\rho_k - 2d_k, \rho_k + 2d_k]$. 

Obviously, $G$ is an entire function of exponential type $\pi$ (with diagram $[-\pi i, \pi i]$)
and $|G(iy)| = o(e^{\pi|y|})$, $y\to \infty$, since $|G(iy)|\asymp |G_0(iy)|$. Thus, all conditions (a)--(d)
are satisfied. 

Let us show that $G$ is the generating function of some complete and minimal system
of reproducing kernels in $PW_\pi$. It is clear from \eqref{gg} that $|G(x)| \gtrsim 
(|x|+1)^{-K} {\rm dist}(x, \mathcal{Z}_G)$, $x\in \mathbb{R}$,  for some $K>0$. 
Thus, if $GT \in PW_\pi$ for some entire function $T$ of zero exponential type, then $T$ 
is a polynomial. However, by \eqref{dd}, for any $n\in [\rho_k+d_k, \rho_k+d_k +2]$ we have
$|G(n)| \asymp \rho_k^{-2\delta_0} d_k^{\delta_0 + \delta} =1$. 
Thus, $GT\notin PW_{\pi}$ for any polynomial $T$.
By \eqref{gg} we also have 
$$
|G(x)| \asymp (|x|+1)^{-2\delta_0} {\rm dist}(x, \mathcal{Z}_G), \qquad 
x\notin \cup I_k, 
$$
and so  $\{G(n): n\in \Z \setminus \cup_k I_k\} \in \ell^2$. Also, 
$$
1\lesssim g_k = \sum_{n\in I_k}|G(n)|^2 \lesssim \frac{d_k^{\delta_0+\delta}}{\rho_k^{2\delta_0}}
\sum_{n\in I_k}\frac{1}{ (|n-(\rho_k+d_k)|+1)^{\delta_0+\delta} } \lesssim 1.
$$
It follows that $\frac{G}{z-\lambda} \in PW_{\pi}$ for any $\lambda\in \mathcal{Z}_G$ and so 
$G$ is the generating function of some complete and minimal system which satisfies (i) of Theorem \ref{main1}.

Since $g_k\asymp 1$ we have $s_k\asymp \sqrt{\rho_k d_k} <\rho_k/100$
for sufficiently large $k$. It remains to verify (ii). 
Let $J_k = [\rho_k-d_k-2s_k, \rho_k - d_k-s_k] \cup [\rho_k+d_k +s_k, \rho_k + d_k + 2s_k]$.
Then
$$
\sum_k s_k \sum_{n\in J_k} G^2(n) \lesssim \sum_k s^2_k \rho_k^{-4\delta_0} = 
\sum_k \rho_k^{1+\frac{2\delta_0}{\delta+\delta_0}-4\delta_0} < 
\sum_k \rho_k^{\frac{2\delta_0}{\delta+\delta_0}-1}.
$$
Since $\delta> \delta_0$, we conclude that the above some converges.    

Note that the constants $\delta>\delta_0\ge 1/2$ were arbitrary and so Theorem \ref{exam} is proved.
It is clear from the last step of the proof that the condition $\delta_0\ge 1/2$ 
is essential for this construction.  
\end{proof}
\bigskip

%%%%%%%%%%%%%%%%%%%%%%%%%%%%%%%%%%%%%%%%%%%%%%%%%%%%%%%%%%%%%%%%%
%%%%%%%%%%%%%%%%%%%%%%%%%%%%%%%%%%%%%%%%%%%%%%%%%%%%%%%%%%%%%%%%%

\section{Case of infinite logarithmic length}
\label{s3}

Throughout this section the symbols $I_k$, $\rho_k$, $d_k$ and $g_k$ will have the same 
meaning as in Section \ref{s1} (note that $J_k$ will denote a different object).

For the converse theorem we need to somehow formalize the statement ``$G$ is big on the intervals $I_k$''. 
This must include the following two ingredients: first of all we do not want, for some 
$k$, the sum $g_k = \summ_{n\in I_k} |G(n)|^2$ to be significantly larger than the same sum for 
its neighbours because otherwise we will not ``feel'' them in $G$ and the total logarithmic 
length may become finite; secondly, we do not want the main contribution to $g_k$
be due to the values of $G$ on a small part of $I_k$ because in that case we will only ``feel'' 
this small part of $I_k$ and logarithmic length may again become finite. {These two parts corresponds to the assumptions $(i)$ and $(ii)$ of the following theorem.}

\begin{theorem}
\label{main2}
Let $G$ be the generating function of some complete and minimal system 
$\{e^{i\lambda t}\}_{\lambda\in \Lambda}$ such that 
$\Lambda \cap \Z =\emptyset$ and 
$$
\summ_{n\in \Z} \frac{|G(n)|^2 +|G(n)|}{|n|+1}<\infty.
$$
Assume that there exists a constant $C > 0$ such that for all $k$ we have
\begin{enumerate}
\item $\summ_{n\notin I_k} \dfrac{|G(n)|^2}{|n - \pho_k|} \le C\dfrac{g_k}{d_k}$\textup;

%\item $\summ_{n\in I_k} |G(n)|^4 \le C \dfrac{g_k^2}{d_k}$\textup;

\item $\sqrt{\dfrac{g_k}{d_k}} \le C|G(x)|$, \quad $x\in I_k$\textup;

\item $\summ_k \dfrac{d_k}{\pho_k} = \infty$.
\end{enumerate}
Then the system $\{ e^{i\lambda t}\}_{\lambda \in \Lambda}$ is hereditarily complete.
\end{theorem}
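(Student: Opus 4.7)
My plan is to argue by contradiction. Suppose the mixed system $\{K_\lambda\}_{\lambda\in\Lambda_1}\cup\{G(z)/(z-\lambda)\}_{\lambda\in\Lambda_2}$ fails to be complete in $PW_\pi$ for some partition $\Lambda=\Lambda_1\cup\Lambda_2$, $\Lambda_1\cap\Lambda_2=\emptyset$. Then there exists a nonzero $h\in PW_\pi$ with $h(\lambda)=0$ for $\lambda\in\Lambda_1$, and using the orthonormal basis $\{K_n\}_{n\in\Z}$ of $PW_\pi$ together with the reality of $G$ on $\R$ and $\Lambda\cap\Z=\emptyset$,
\[
\sum_{n\in\Z}\frac{h(n)G(n)}{n-\lambda}=0,\qquad \lambda\in\Lambda_2,
\]
with absolute convergence guaranteed by $\{h(n)\}\in\ell^2$ and the summability hypothesis on $G$. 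Since $h$ vanishes on $\Lambda\setminus\Lambda_2$, the function $S:=h/G$ is meromorphic of minimal exponential type with simple poles exactly on $\Lambda_2$ and residues $h(\lambda)/G'(\lambda)$. It therefore suffices to prove $h(\lambda)=0$ for every $\lambda\in\Lambda_2$: this forces $h$ to vanish on all of $\Lambda$, and since the original system $\{K_\lambda\}_{\lambda\in\Lambda}$ is complete in $PW_\pi$, we conclude $h\equiv 0$.

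The heart of the argument is an interval-wise bound. For each $k$ with $\Lambda_2\cap I_k\neq\emptyset$ and each $\lambda\in\Lambda_2\cap I_k$, split the orthogonality as
\[
\sum_{n\in I_k}\frac{h(n)G(n)}{n-\lambda}=-\sum_{n\notin I_k}\frac{h(n)G(n)}{n-\lambda}.
\]
For $n\notin I_k$ and $\lambda\in I_k$ we have $|n-\lambda|\asymp|n-\rho_k|$, so Cauchy--Schwarz combined with assumption (i) yields $\bigl|\sum_{n\notin I_k}h(n)G(n)/(n-\lambda)\bigr|\lesssim \|h\|_{PW_\pi}\sqrt{g_k}/d_k$. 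On the interior, assumption (ii) -- the pointwise lower bound $|G(x)|\gtrsim\sqrt{g_k/d_k}$ on $I_k$ -- lets us treat $G$ as an invertible weight, turning the orthogonality into a uniform pointwise bound on a discrete Cauchy transform of $(h(n))_{n\in I_k}$ at each $\lambda\in\Lambda_2\cap I_k$. A duality argument for this Cauchy transform over the length-$d_k$ interval (exploiting the locally dense geometry of $\Lambda$ inside $I_k$) then yields a local lower bound of the form
\[
\sum_{n\in I_k}|h(n)|^2\;\gtrsim\;\frac{d_k}{\rho_k}\cdot|h(\lambda)|^2\qquad\text{for each }\lambda\in\Lambda_2\cap I_k.
\]
Summing over $k$ and using $\sum_n|h(n)|^2=\|h\|_{PW_\pi}^2<\infty$ together with $\sum_k d_k/\rho_k=\infty$ from (iii) forces $h(\lambda)=0$ for every $\lambda\in\Lambda_2$, and the reduction in the previous paragraph then gives $h\equiv 0$.

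The main technical obstacle is making the interval-wise bound above sharp: assumptions (i) and (ii) must be combined so that the factor $d_k/\rho_k$ emerges cleanly, and the cross-interactions between different intervals $I_k$ must be controlled without loss of constants. This step is the precise dual of the contraction estimate in Theorem \ref{main1}, where convergence of $\sum d_k/\rho_k$ guaranteed $\|T_{\mathrm{lin}}\|<1$ and enabled construction of a nontrivial $h$; here, divergence of exactly the same sum is what rules out the existence of any nonzero $h$.
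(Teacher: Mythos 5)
Your overall strategy is not the paper's, and as written it has gaps that I do not see how to close. The most serious one is the final step: each $\lambda\in\Lambda_2$ lies in at most one interval $I_k$, so even granting your interval-wise bound $\sum_{n\in I_k}|h(n)|^2\gtrsim\frac{d_k}{\rho_k}|h(\lambda)|^2$, summing over $k$ and using $\sum_n|h(n)|^2<\infty$ together with $\sum_k d_k/\rho_k=\infty$ only tells you that $\liminf_k\max_{\lambda\in\Lambda_2\cap I_k}|h(\lambda)|^2=0$; for a \emph{fixed} $\lambda$ you merely get the finite bound $|h(\lambda)|^2\le\frac{\rho_k}{d_k}\sum_{n\in I_k}|h(n)|^2$, not $h(\lambda)=0$. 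Divergence of the logarithmic length would only bite if the same value $|h(\lambda)|$ recurred in infinitely many intervals, which it does not. A second structural problem: the partition $\Lambda=\Lambda_1\cup\Lambda_2$ is arbitrary, so there is no reason $\Lambda_2$ should meet the intervals $I_k$ at all (it only needs to be infinite); your whole mechanism tests the orthogonality relations at points of $\Lambda_2\cap I_k$, which may be empty for every $k$. Finally, the key ``duality'' lower bound is asserted without any mechanism ((ii) is only a lower bound on $|G|$, so the weights $G(n)$ are not comparable to a constant, and it is unclear where the factor $d_k/\rho_k$ would come from), and a smaller slip is that $|n-\lambda|\asymp|n-\rho_k|$ fails when $\lambda$ sits near an endpoint of $I_k$ and $n$ is just outside (the paper restricts such estimates to the middle half of $I_k$).

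What actually drives the paper's proof, and is absent from your proposal, is positivity. From the orthogonality relations one first derives the functional equation \eqref{below}, $f(z)\sum_n\frac{a_nG(n)}{z-n}=G(z)\sum_n\frac{|a_n|^2}{z-n}$, whose proof already uses (i) and (ii) in a nontrivial way to exclude an additive constant. The right-hand Cauchy transform has \emph{nonnegative} masses $|a_n|^2$, and Proposition \ref{lem1} shows that such a transform cannot be $\ell^1$-small along a separated sequence of infinite logarithmic length unless all masses vanish; this is the only place where divergence of $\sum_k d_k/\rho_k$ is used, and it yields $a_n\equiv0$, i.e.\ $f\equiv0$. To produce the test points one does not use $\Lambda_2$ at all: one introduces the auxiliary Hilbert space of functions $\sum_n b_n|G(n)|/(z-n)$ and the zeros of $M(t)=\sum_n|G(n)|^2/(n-t)$ (one in each $(n,n+1)$, with the quantitative control of Lemma \ref{lemm2} coming from (i), (ii) and a weak-type bound for the discrete Hilbert transform); the corresponding reproducing kernels are orthogonal, which is what gives summable control across all intervals rather than a single-$k$ estimate. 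I would suggest rebuilding your argument around these three ingredients rather than around pointwise vanishing of $h$ on $\Lambda_2$.
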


%\begin{remark}
%Note that the first condition is implied by the following assumptions: $\{ G(n)\} \in \ell^2(\Z \backslash \bigcup\limits_k I_k)$, $\summ_{m < k}\frac{g_m}{\pho_k} + \summ_{m > k}\frac{g_m}{\pho_m} \le C\frac{g_k}{d_k}$ and $\summ_{n\in J_k} \frac{|G(n)|^2}{|n-\pho_k|} \le C\frac{g_k}{d_k}$, where now $J_k = [\pho_k + d_k, 1.1\pho_k + d_k]\bigcup [0.9\pho_k - d_k, \pho_k - d_k]$. That is, as in the previous Theorem $G$ is in $\ell^2$ outside of $\bigcup\limits_k I_k$, it is a bit smaller in some vicinity $J_k$ of $I_k$ and none of $g_k$ dominates each other. In particular, if $g_k \asymp 1$ then we want just $\frac{d_k}{\pho_k} \le \frac{C}{k}$ and $\{G(n)\} \in \ell^2(\Z \backslash \bigcup\limits_k I_k)$.
%\end{remark}

For the proof of Theorem \ref{main2} we need the following proposition.

\begin{proposition}
\label{lem1}
Let $t_k\in \mathbb{R}$ be an increasing sequence \textup(one-sided or two-sided\textup) 
which is separated, i.e., $t_{k+1} - t_k \ge \delta$ for some $\delta>0$,
and let $\mu_k \ge 0$, $\{ \mu_k\} \in \ell^1(\Z)$. 
Let $\{\gamma_n\}$ be an increasing separated sequence such that 
${\rm dist}(\{\gamma_n\}, \{t_k\}) = d>0$ 
and $\summ_n \frac{1}{\gamma_n} = \infty$. Then, if for the function
$$ 
f(z) = \summ_k \frac{\mu_k}{z-t_k}
$$
we have $\{f(\gamma_n)\} \in \ell^1(\N)$, then $\mu_k \equiv 0$.
\end{proposition}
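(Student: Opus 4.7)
I would argue by contradiction: suppose $\mu_k\not\equiv 0$, so $M:=\summ_k \mu_k>0$, while $\{f(\gamma_n)\}\in\ell^1(\N)$. The starting observation is that, using $\summ \mu_k<\infty$ and $\dist(\{\gamma_n\},\{t_k\})\ge d$ to justify the interchange of finite sums,
\[
\summ_{n=1}^{N} f(\gamma_n) \;=\; \summ_{k} \mu_k\, S_N^k, \qquad
S_N^k \;:=\; \summ_{n=1}^{N} \frac{1}{\gamma_n - t_k}.
\]
The left-hand side is bounded in $N$ by $\summ_n |f(\gamma_n)|<\infty$, and hence has a finite limit as $N\to\infty$.

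The key asymptotic fact is that $S_N^k\to+\infty$ as $N\to\infty$, for every $k$. Indeed, since $\{\gamma_n\}$ is increasing and separated with $\summ_n 1/\gamma_n=\infty$, one has $\gamma_n\to+\infty$; for $n$ large enough that $\gamma_n>t_k$, the term $1/(\gamma_n-t_k)$ is positive and comparable to $1/\gamma_n$, so the tail diverges, while the finitely many remaining terms with $\gamma_n<t_k$ are bounded in absolute value by $1/d$ each.

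To convert this into a contradiction I would split $S_N^k = A_N^k - B_N^k$ into its positive and absolute-negative parts, both nonnegative and nondecreasing in $N$, with $B_N^k\nearrow B_\infty^k<\infty$ for each $k$. Fubini for nonnegative sums rewrites $\summ_k \mu_k A_N^k = \summ_{n=1}^{N} A_n$ and $\summ_k \mu_k B_N^k = \summ_{n=1}^{N} B_n$, where $A_n,B_n\ge 0$ satisfy $A_n-B_n=f(\gamma_n)$. In the benign subcase $\summ_n B_n<\infty$, the $\ell^1$ bound on $f(\gamma_n)$ forces $\summ_n A_n<\infty$, hence $\summ_k \mu_k A_\infty^k<\infty$; combined with $A_\infty^k=+\infty$ for every $k$, this contradicts $\mu\not\equiv 0$.

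The main obstacle is the remaining subcase $\summ_n B_n=+\infty$, where both the positive and negative contributions to $f(\gamma_n)$ individually diverge but cancel to produce an $\ell^1$ sequence; the naive monotone/Fatou argument degenerates to ``$\infty-\infty$''. My plan to handle it is to truncate $\mu=\mu_T+\mu^T$ with $\mu_T=\mu|_{[-T,T]}$: for $\gamma_n\gg T$ the truncated transform $f_T(\gamma_n)$ has constant positive sign with $f_T(\gamma_n)\gtrsim M_T/\gamma_n$, while $f^T(\gamma_n)$ is controlled pointwise by $\|\mu^T\|/d$. By letting $T$ grow together with an upper cutoff on the considered $\gamma_n$ at just the right rate, the positive bias of $f_T$ should accumulate faster (through $\summ 1/\gamma_n=\infty$) than the tail can absorb, yielding the desired contradiction with $\ell^1$-summability. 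Making this balance quantitatively work without any extra moment condition on $\mu$ is, I expect, the technical heart of the proof.
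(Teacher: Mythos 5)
Your reduction to the summed identity $\sum_{n\le N} f(\gamma_n)=\sum_k \mu_k S_N^k$ and the ``benign'' subcase are fine, but the proof has a genuine gap exactly where you say the technical heart lies, and the truncation plan you sketch for that case does not work under the hypotheses as stated. Writing $\tau(T)=\sum_{|t_k|>T}\mu_k$, your window of usable points is essentially $\{\gamma_n: 2T\le \gamma_n \lesssim d\,M_T/\tau(T)\}$, since for larger $\gamma_n$ the crude bound $|f^T(\gamma_n)|\le \tau(T)/d$ swamps $M_T/\gamma_n$. But $\mu\in\ell^1$ gives no rate for $\tau$: if, say, $\tau(T)\asymp 1/\log T$, then $d\,M_T/\tau(T)\asymp \log T \ll 2T$ and the window is empty for all large $T$, so no divergence of $\sum 1/\gamma_n$ can be extracted this way. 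More fundamentally, the pointwise bound $f(\gamma_n)\gtrsim 1/\gamma_n$ you are aiming for on such windows is simply false in general: with all $\mu_k>0$, $f$ is strictly negative on an interval $(s_k,t_{k+1})$ immediately to the left of each pole, and $\gamma_n$'s may well land there. Any argument that only keeps track of absolute scales ($T$ versus $\gamma_n$) and tail mass, rather than the position of $\gamma_n$ relative to the poles $t_k$, cannot rule this out.

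The paper's proof supplies precisely the missing structural input. Assuming (WLOG) all $\mu_k>0$ and $\sup t_k=\infty$, the function $f$ has a unique zero $s_k$ in each gap $(t_k,t_{k+1})$, and by \cite[Proposition 5.4]{bbb1} these zeros sit near the outer endpoints in the logarithmic sense: $\sum_{t_k>0}(t_{k+1}-s_k)/s_k<\infty$ (and the same for $\tilde f=f-\frac{\mu_0}{2(z-t_0)}$ with zeros $\tilde s_k$). Since $\{\gamma_n\}$ is separated and at distance $d$ from $\{t_k\}$, the number of $\gamma_n$ in $(\tilde s_k,t_{k+1})$ is $O(t_{k+1}-\tilde s_k)$, so the $\gamma_n$ lying in these ``bad'' intervals contribute only a convergent part to $\sum_n 1/\gamma_n$ and may be discarded. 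For the remaining $\gamma_n\in(t_k,\tilde s_k)$, monotonicity of $\tilde f$ on the gap gives $\tilde f(\gamma_n)\ge 0$, hence $f(\gamma_n)\ge \frac{\mu_0}{2(\gamma_n-t_0)}\gtrsim 1/\gamma_n$, and $\sum_n 1/\gamma_n=\infty$ contradicts $\{f(\gamma_n)\}\in\ell^1$. So the step you would need to complete your argument is exactly a quantitative control of where $f$ can be negative, and that is what the zero-localization lemma provides; without it (or some substitute tail-decay hypothesis on $\mu$, which is not available), your plan cannot be closed.
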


\begin{proof}[Proof of Proposition \ref{lem1}]
Let us assume that $\sup \{t_k\} = \infty$. Otherwise, $f(x) \asymp x^{-1}$, $x\to\infty$, 
and the statement is trivial. 

Without loss of generality we can assume that all $\mu_k$ are positive. Then it is clear that $f$ has a unique zero $s_k$ in each interval $(t_k, t_{k+1})$. It is known and not difficult to show
(see \cite[Proposition 5.4]{bbb1}) that these zeros in a sense approach the 
``outer'' ends of the intervals $(t_k, t_{k+1})$, namely, 
$$
\sum_{t_k>0} \frac{t_{k+1} - s_k}{s_k}<\infty, \qquad\quad
\sum_{t_k<0} \frac{s_k - t_k}{|s_k|}<\infty.
$$

Put $\tilde f(z) = f(z) - \frac{\mu_0}{2(z-t_0)}$ and denote the zeros of 
$\tilde f$ in $(t_k, t_{k+1})$ by $\tilde s_k$. Then we also have
\begin{equation}
\label{asimp0}
\sum_{t_k>0} \frac{t_{k+1} - \tilde s_k}{\tilde s_k}<\infty.
\end{equation}

Consider first those $\gamma_n$ which belong to $E_k = \cup_{t_k>0} (\tilde s_k, t_{k+1})$. Since $\{\gamma_n\}$
is separated and ${\rm dist}(\{\gamma_n\}, \{t_k\}) =d>0$, 
we conclude that for a fixed $k$ the number of points $\gamma_n \in (\tilde s_k, t_{k+1})$
does not exceed $C(t_{k+1} - \tilde s_k)$ for some $C>0$ independent on $k$. In particular, 
the interval $(\tilde s_k, t_{k+1})$ contains no 
points $\gamma_n$ if $t_{k+1} - \tilde s_k <d $. Hence,
$$
\summ_{t_k>0} \summ_{\gamma_n \in E_k} \frac{1}{\gamma_n} 
\lesssim \sum_{t_k>0} \frac{t_{k+1} - \tilde s_k}{\tilde s_k} <\infty. 
$$ 
Thus, we may assume without loss of generality that $\gamma_n \in \cup_{t_k>0} (t_k, \tilde s_k)$
for all $n$. Since $\tilde f$ is decreasing on each interval $(t_k, t_{k+1})$ 
we have $\tilde f (\gamma_n) \ge f(\tilde s_k) = 0$ whenever $\gamma_n \in (t_k, t_{k+1})$, 
and so 
$$
f(\gamma_n) = \frac{\mu_0}{2(\gamma_n-t_0)} + \tilde f (\gamma_n) \gtrsim \frac{1}{\gamma_n}.
$$
This contradicts the assumption that $\{f(\gamma_n)\} \in \ell^1$. 
\end{proof}

In the proof of Theorem \ref{main2} we will need some auxiliary Hilbert space of meromorphic functions
in $\mathbb{C} \setminus\Z$. Put 
$$
\Hi = \bigg\{ \summ_{n\in \Z} \frac{b_n|G(n)|}{z-n}, \{b_n\} \in \ell^2\bigg\}, \qquad
\bigg\langle \summ_{n\in \Z} \frac{b_n|G(n)|}{z-n},  \summ_{n\in \Z} \frac{c_n|G(n)|}{z-n}
\bigg\rangle _{\Hi} = \sum_{n\in \Z} b_n\overline{c_n}.
$$ 
It is easy to see that $\Hi$ is a Hilbert space whose reproducing kernel 
at $\lambda \in \mathbb{C} \setminus\Z$ is given by 
$\mathcal{K}_\lambda(z) = \summ_{n\in \Z} \frac{|G(n)|^2}{(z-n)(\bar \lambda - n)}$ and,
in particular, 
\begin{equation}
\label{rep}
\|\mathcal{K}_\lambda\|_{\Hi}^2 = \sum_{n\in\Z}
\frac{|G(n)|^2}{|\lambda-n|^2}.
\end{equation}

Next, consider the function
\begin{equation}
M(t) = \summ_n \frac{|G(n)|^2}{n-t}.
\end{equation}
Each interval $(n, n+1)$ contains exactly one root of the equation $M(t) = 0$.
Pick those roots which lie in $J_k =\frac{1}{2}I_k = [\pho_k - \frac{d_k}{2}, 
\pho_k + \frac{d_k}{2}]$ for some $k$ and denote the resulting sequence by $\{t_n\}$.

Note that, for real $z\ne w$, we have
$$
\langle  \mathcal{K}_w, \mathcal{K}_z \rangle = \mathcal{K}_w(z) = \frac{M(z) - M(w)}{z-w}.
$$
Since $M(t_n) = 0$ for all $n$, the functions $\mathcal{K}_{t_n}$ form an orthogonal system 
in $\Hi$. 

\begin{lemma}
\label{lemm2}
There exist sets $N_k \subset \mathbb{N} \cap J_k$ and a constant $C>0$ \textup(independent on $k$\textup) 
such that $|N_k|\ge d_k/2$ and 
\begin{equation}
\label{ddd}
\|\mathcal{K}_{n+\frac{1}{2}}\|^2_{\Hi} \le C \frac{g_k}{d_k}, \qquad
\Big| M\Big(n+\frac{1}{2}\Big)\Big| \le C \frac{g_k}{d_k}, \qquad n\in N_k.
\end{equation}
\end{lemma}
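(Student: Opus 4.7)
\textbf{Plan of proof of Lemma \ref{lemm2}.} The strategy is to establish each of the two inequalities in \eqref{ddd} by an averaging argument over $n \in \mathbb{Z} \cap J_k$, and then to take $N_k$ to be the common set of ``good'' indices.

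For the first inequality, I would use \eqref{rep} and swap the order of summation to write
$$
\sum_{n \in \mathbb{Z} \cap J_k} \|\mathcal{K}_{n+\frac12}\|^2_{\Hi} = \sum_{m \in \Z} |G(m)|^2 \sum_{n \in \mathbb{Z} \cap J_k} \frac{1}{(m - n - \frac12)^2},
$$
and bound the two contributions $m \in I_k$ and $m \notin I_k$ separately. For $m \in I_k$ the inner sum is bounded by $\sum_{j \in \mathbb{Z}}(j+\frac12)^{-2}=\pi^2$, contributing at most $\pi^2 g_k$. For $m \notin I_k$, the conditions $|n-\rho_k|\leq d_k/2$ and $|m-\rho_k|\geq d_k$ imply $|m - n - \frac12|\geq \frac12|m-\rho_k|$, so the inner sum is $\lesssim d_k/|m-\rho_k|^2$; combined with $|m-\rho_k|^{-2}\le (d_k|m-\rho_k|)^{-1}$ and assumption (i), this contributes $O(g_k/d_k)$. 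Altogether $\sum_{n\in\Z\cap J_k}\|\mathcal{K}_{n+\frac12}\|^2 \leq Cg_k$, and Markov's inequality produces a set of at least $(1-1/K_1)|J_k\cap\Z|$ indices on which $\|\mathcal{K}_{n+\frac12}\|^2 \le K_1 C g_k/d_k$.

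For the second inequality I would exploit $M(t_n)=0$ together with $M'(x)=\|\mathcal K_x\|_\Hi^2$:
$$
\Big|M\Big(n+\tfrac12\Big)\Big| = \Big|\int_{t_n}^{n+\frac12} M'(x)\,dx\Big| \leq \tfrac12 \sup_{x \in [t_n,n+\frac12]} \|\mathcal{K}_x\|^2_{\Hi}.
$$
A termwise comparison shows that, provided $x \in (n+\frac14,\, n+\frac34)$, we have $\|\mathcal K_x\|^2 \lesssim |G(n)|^2 + |G(n+1)|^2 + \|\mathcal K_{n+\frac12}\|^2$, since for $m\neq n,n+1$ the denominators satisfy $(m-x)^2 \asymp (m-n-\frac12)^2$, while the two singular terms are bounded by constants times $|G(n)|^2$ and $|G(n+1)|^2$. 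Hence it suffices to define $N_k$ to consist of those $n \in \mathbb{Z} \cap J_k$ for which simultaneously: (1) $\|\mathcal K_{n+\frac12}\|^2 \leq K_1 g_k/d_k$; (2) $|G(n)|^2, |G(n+1)|^2 \leq K_2 g_k/d_k$; and (3) $t_n \in (n+\frac14,\, n+\frac34)$.

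Conditions (1) and (2) each fail on at most $O(d_k/K_j)$ indices, by the averaging argument above and by Markov applied to $\sum_{m \in I_k}|G(m)|^2 = g_k$, respectively. The main obstacle is condition (3): a~priori $t_n$ need not be close to $n+\frac12$, and one can cook up toy cases (say $|G(m)|^2$ alternating between two very different scales on $I_k$) where $t_n$ is systematically pulled toward the integer where $|G|^2$ is larger. The remedy is precisely that assumption (ii) prevents $|G|^2$ from being too small on $I_k$, while (2) removes the rare indices where $|G|^2$ is anomalously large; for the remaining $n$ the neighbouring values $|G(n)|^2$ and $|G(n+1)|^2$ are forced to be comparable, and a direct analysis of the defining relation $M(t_n)=0$ pins $t_n$ to the middle half of $(n, n+1)$. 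Quantifying this, via an additional Markov bound on $|t_n - n - \frac12|$ expressed through (1) and (2), will show that (3) fails on at most $O(d_k/K_3)$ indices. Choosing $K_1, K_2, K_3$ large but independent of $k$ leaves $|N_k|\geq d_k/2$ and yields the stated constant $C$.
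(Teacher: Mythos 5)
Your treatment of the first inequality is correct and is essentially the paper's own argument: sum $\|\mathcal{K}_{n+\frac12}\|^2_{\Hi}$ over $n\in\Z\cap J_k$, split the inner sum into $m\in I_k$ and $m\notin I_k$, use assumption (i) for the latter, get a total of $O(g_k)$, and apply Markov. The second inequality is where your proposal has a genuine gap. Your route bounds $\bigl|M(n+\frac12)\bigr|$ via $M(t_n)=0$ and the mean value estimate, which forces you to know that $t_n$ stays in the middle of $(n,n+1)$ for most $n\in J_k$. But in the paper's logic this localization of the zeros is exactly Lemma \ref{lemm3}, and it is \emph{deduced from} Lemma \ref{lemm2}: one shows $t_n$ is at distance $\ge\vep$ from the integers precisely because $|G(n)|^2\gtrsim g_k/d_k$ (assumption (ii)) \emph{and} $|M(n+\frac12)|\lesssim g_k/d_k$. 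Using the location of $t_n$ to prove the bound on $M(n+\frac12)$ is therefore circular unless you supply an independent argument, and the one you sketch does not work: comparability of $|G(n)|^2$ and $|G(n+1)|^2$ (which in any case only holds up to the large constants $C^2K_2$, so it pins $t_n$ to $(n+\eps,n+1-\eps)$ at best, not to the middle half) says nothing about the tail $\sum_{m\ne n,n+1}|G(m)|^2/(m-n-\frac12)$, and it is this tail that can push $t_n$ toward an endpoint. Assumption (ii) is only a lower bound on $|G|^2$, so a single point $m_0\in I_k$ may carry mass $\sim g_k$; its contribution to the tail exceeds $Kg_k/d_k$ on $\sim d_k/K$ indices $n$, and controlling the size of this exceptional set is exactly the content you have not proved. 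Moreover, the quantity you propose to average, essentially the tail (or $|t_n-n-\frac12|$ via it), does \emph{not} admit a useful first-moment bound: the $\ell^1$ norm over $J_k$ of the discrete Hilbert transform of $|G|^2\chi_{I_k}$ can be of order $g_k\log d_k$ (e.g.\ in the spike scenario), so Markov with threshold $Kg_k/d_k$ only excludes a set of size $\sim d_k\log d_k/K$, and you would need $K\gtrsim\log d_k$, destroying the uniformity of $C$ in $k$.

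The missing idea, and the paper's actual route, is to estimate $M(n+\frac12)$ directly without any reference to the zeros $t_n$: write $M(n+\frac12)=S_1(n)+S_2(n)$ with $S_2$ the sum over $\Z\setminus I_k$ (bounded by $Cg_k/d_k$ via assumption (i), as in your first part) and $S_1$ the sum over $I_k$, and then apply the weak-type $(1,1)$ bound for the discrete Hilbert transform to the sequence $|G(m)|^2\chi_{I_k}(m)$, whose $\ell^1$ norm is $g_k$. This gives $|S_1(n)|\le 100Cg_k/d_k$ outside a set of at most $d_k/100$ integers, with an absolute constant, which is precisely what a first-moment argument cannot deliver. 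If you replace your step (3) and the ``additional Markov bound'' by this weak-type estimate (or an equivalent distributional argument handling concentrated mass), the proof closes; as written, it does not.
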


\begin{proof}
Throughout the proof, symbol $C$ will denote different constants independent on $k$.
We have 
$$
\summ_{n\in J_k} \|\mathcal{K}_{n+\frac{1}{2}}\|_{\Hi}^2 = 
\summ_{n\in J_k} \summ_{m\in\Z} \frac{|G(m)|^2}{(n-m+\frac{1}{2})^2}. 
$$
By condition (i) of Theorem \ref{main2}, 
$$
\summ_{n\in J_k} \summ_{m\notin I_k} \frac{|G(m)|^2}{(n-m+\frac{1}{2})^2} \le C g_k,  
$$
while 
$$
\summ_{n\in J_k} \summ_{m\in I_k} \frac{|G(m)|^2}{(n-m+\frac{1}{2})^2} =
\summ_{m\in I_k} |G(m)|^2 \summ_{n\in J_k} \frac{1}{(n-m+\frac{1}{2})^2} \le Cg_k.
$$
We conclude that 
$\summ_{n\in J_k} \|\mathcal{K}_{n+\frac{1}{2}}\|_{\Hi}^2 \le Cg_k$, whence,
for any $\vep>0$, we have $\|\mathcal{K}_{n+\frac{1}{2}}\|_{\Hi}^2 \le 2C\vep^{-1} \frac{g_k}{d_k}$
for some set $N_k$ with $|N_k|\ge (1-\vep)d_k$. 

{
Estimate for $|M(n+\frac{1}{2})|$ is more delicate. First, we split it into the sums over $I_k$ and $\Z \backslash I_k$
\begin{equation}
M\biggl{(}n+\frac{1}{2}\biggr{)} = \sum_{m\in I_k} \frac{|G(m)|^2}{m-n-\frac{1}{2}} + \sum_{m\notin I_k} \frac{|G(m)|^2}{m-n-\frac{1}{2}} = S_1(n) + S_2(n).
\end{equation}
For $n\in J_k$ as above we can deduce from the  assumption (i) that $|S_2(n)| \le C\frac{g_k}{d_k}$. Therefore it remains to estimate $S_1(n)$.

For a sequence $x = (x_m)\in \ell^1(\Z)$ consider the operator $T$ defined as
\begin{equation}
(Tx)_n = \summ_{m\in \Z} \frac{x_m}{m-n-\frac{1}{2}}.
\end{equation}
$T$ is a discrete Hilbert transform and as such it has a weak-type $(1, 1)$ bound
\begin{equation}
|\{ n\in \Z : |(Tx)_n| > \lambda\}| \le C\frac{||x||_{\ell^1}}{\lambda},
\end{equation}
where $C$ is an absolute constant.

Applying this bound to the sequence $x_n = |G(n)|^2 \chi_{I_k}(n)$ with $\lambda = 100Cg_k\slash d_k$ we get
\begin{equation}
|\{ n\in \Z: |S_1(n)| > 100Cg_k\slash d_k\}| \le \frac{d_k}{100}.
\end{equation} 

Therefore for $n\in J_k$ and outside of this exceptional set we get the desired estimate. Since $|J_k| - \frac{1}{100}d_k = \frac{99}{100}d_k > \frac{d_k}{2}$ the lemma is proved.
}

%To prove the estimate for $|M(n+\frac{1}{2})|$ we will need assumption (ii). 
%Note that, for $n\in J_k$, 
%$$
%|M(n+\frac{1}{2})| \le \Sigma_n + C\frac{g_k}{d_k}, 
%\qquad  \Sigma_n  = \bigg| \summ_{m\in I_k} \frac{|G(m)|^2}{n-m+\frac{1}{2}} \bigg|.
%$$
%By (i) we can again neglect the sum over  $m\notin I_k$.

%Consider the function 
%$$
%F(z) = \sin \pi z \summ_{m\in I_k} \frac{|G(m)|^2}{z-m}. 
%$$
%Then $F\in PW_\pi$ and, by (ii), 
%$$
%|F\|^2 = \pi^2 \summ_{m\in I_k} |G(m)|^4 \le C\frac{g_k^2}{d_k}. 
%$$
%Thus,
%$$
%\summ_{n\in J_k} \Sigma_n^2  = \summ_{n\in J_k} \Big| F\Big(n+\frac{1}{2}\Big)\Big|^2  \le C\frac{g_k^2}{d_k}.
%$$
%It follows that for any $\vep>0$, we have 
%$\big| M\big(n+\frac{1}{2}\big)\big|^2  \le 2C\vep^{-1} \frac{g_k^2}{d_k^2}$
%for some set $N_k$ with $|N_k|\ge (1-\vep)d_k$. Since $\vep>0$ can be arbitrary small, we can choose 
%a set $N_k$ with $|N_k|\ge d_k/2$ such that the estimate \eqref{ddd} holds.
\end{proof}

\begin{lemma}
\label{lemm3}
Let $N_k$ be the sets from Lemma \ref{lemm2}. Then there exists $\vep>0$ such that for any k and for any 
$n\in N_k$ the zero $t$ of the function in the interval $(n, n+1)$ 
belongs to $(n+\vep, n+1-\vep)$.
\end{lemma}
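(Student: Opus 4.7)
The plan is to exploit three ingredients: monotonicity of $M$ on $(n,n+1)$; the uniform bound $|M(n+1/2)|\le Cg_k/d_k$ supplied by Lemma \ref{lemm2}; and the pointwise lower bound $|G(m)|^2\ge c\, g_k/d_k$ for integer $m\in I_k$, which is exactly assumption (ii) of Theorem \ref{main2}. Writing $A_k=g_k/d_k$, I would argue that the singular contribution of the single term $|G(n)|^2/(n-s)^2$ to $M'$ already forces the zero of $M$ to stay a definite distance from the left endpoint $n$, and symmetrically for the right endpoint via the $m=n+1$ term.

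Concretely, since $N_k\subset J_k=\frac{1}{2}I_k$, for $n\in N_k$ both $n$ and $n+1$ lie in $I_k$ as soon as $d_k\ge 2$; for the (at most finitely many) smaller intervals I would simply shrink $\varepsilon$ at the end. The function $M$ is strictly increasing on $(n,n+1)$ with $M(n^+)=-\infty$ and $M((n+1)^-)=+\infty$, so it has a unique zero $t$ there. Suppose first $t\in(n,n+1/2]$, so $M(n+1/2)\ge 0$. Then
\[
M(n+1/2)=\int_{t}^{n+1/2}M'(s)\,ds\ \ge\ \int_{t}^{n+1/2}\frac{|G(n)|^2}{(s-n)^2}\,ds
= |G(n)|^2\Bigl(\tfrac{1}{t-n}-2\Bigr).
\]
Combining $|G(n)|^2\ge cA_k$ with $M(n+1/2)\le CA_k$ yields $\frac{1}{t-n}-2\le C/c$, whence $t-n\ge \varepsilon_0:=(C/c+2)^{-1}$. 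The case $t\in[n+1/2,n+1)$ is handled identically: $-M(n+1/2)\ge |G(n+1)|^2(1/(n+1-t)-2)$ gives $n+1-t\ge \varepsilon_0$. Then $\varepsilon:=\min(\varepsilon_0,1/2)$ works.

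I do not anticipate any real obstacle here; this is essentially an elementary barrier estimate using one singular term of $M'$. The only subtle point is the need for $n+1\in I_k$ in order to invoke the lower bound on $|G(n+1)|$, which just requires $d_k\ge 2$ and is handled by the remark (made earlier in the paper) that throwing away finitely many intervals is harmless. Everything else is a quantitative consequence of the two estimates in Lemma \ref{lemm2} together with hypothesis (ii) of Theorem \ref{main2}.
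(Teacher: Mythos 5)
Your argument is essentially the paper's own proof: the same barrier estimate, obtained by integrating the single singular term $|G(n)|^2/(s-n)^2$ (respectively $|G(n+1)|^2/(s-n-1)^2$) of $M'$ from the zero to $n+\frac12$, and playing this against the two bounds $|M(n+\frac12)|\le C g_k/d_k$ from Lemma \ref{lemm2} and $|G|^2\gtrsim g_k/d_k$ on $I_k$ from hypothesis (ii). The only inexact point is your parenthetical claim that at most finitely many intervals have $d_k<2$ (needed to ensure $n+1\in I_k$): the hypotheses do not force this, but the issue is harmless, since all such intervals can simply be discarded — by lacunarity of $\rho_k$ their total logarithmic length satisfies $\sum_{k:\,d_k<2} d_k/\rho_k \lesssim \sum_k \rho_k^{-1}<\infty$, so condition (iii) survives — and the paper's own proof silently relies on the same reduction.
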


\begin{proof}
Assume that $M(n+\frac{1}{2}) >0$. We have 
$$
M'(t) = \summ_{m\in \Z} \frac{|G(m)|^2}{(t - m)} \ge \frac{|G(n)|^2}{(t - n)^2}.
$$ 
Since $|G(n)|^2 \ge C \frac{g_k}{d_k}$ and $M\big(n+\frac{1}{2}\big) \le C\frac{g_k}{d_k}$ 
there exists $\eps>0$ (depending on $C$ but not on $k$ and $n$)
such that $M(n+\eps) \le M\big(n+\frac{1}{2}\big) - |G(n)|^2\int_{n+\vep}^{n+\frac{1}{2}}\frac{dt}{(t-n)^2} < 0$. 
Thus, for some $t\in (n + \eps, n + \frac{1}{2})$ we have $M(t) = 0$. 

In the case when $M(n+\frac{1}{2})<0$, one shows by the same argument that the root of $M$ will lie
in $(n+\frac{1}{2}, n + 1 - \eps)$. 
\end{proof}

\begin{proof}[Proof of Theorem \ref{main2}]
Assume that the system of reproducing kernels $\{K_\lambda\}_{\lambda\in \Lambda}$
is not hereditarily complete. Then there exists a nonzero function $f\in PW_\pi$, 
$$
f(z) = \sin \pi z \summ_{n\in \Z} \frac{(-1)^n \bar a_n}{z-n},
$$
and a set $\Lambda_1\subset \Lambda$ such that $f$ is orthogonal to all 
$G_\lambda, \lambda \in \Lambda_1$, and $K_\lambda$,  $\lambda \in \Lambda_2 = \Lambda \backslash \Lambda_1$. 
Our first step is to prove the following equality:
\begin{equation}
\label{below}
f(z) \summ_n \frac{a_nG(n)}{z-n} = G(z)\summ \frac{|a_n|^2}{z-n}.
\end{equation}
Indeed, note that $f(n) = (-1)^n \bar a_n$ and so  the residues at $\Z$ coincide. Since the left-hand side 
of \eqref{below}
vanishes at $\lambda\in \Lambda$, there is an entire function $T$ such that 
\begin{equation}
\label{hjk}
f(z) \summ_n \frac{a_nG(n)}{z-n} - G(z)\summ \frac{|a_n|^2}{z-n} = G(z)T(z). 
\end{equation}
It is clear that $T$ is of zero exponential type. Recall that $\frac{G}{z-\lambda} \in PW_\pi$
for any zero $\lambda$ of $G$. Hence, the left-hand side of 
\eqref{hjk} is in the class $PW_\pi +zPW_\pi$. If $T$ has at least one zero $\zeta$, 
we conclude that $G\cdot \frac{T}{z-\zeta} \in PW_\pi$, a contradiction to the fact
that $G$ is the generating function of a complete sequence of reproducing kernels. 
Thus, $T = c\in \co$ and
$$ 
f(z) \summ_n \frac{a_nG(n)}{z-n} = G(z)\Big( c+ \summ \frac{a_n^2}{z-n}\Big).
$$

It remains to exclude the case when $c\ne 0$.
Put $E_k = J_k \setminus \cup_{n\in\Z} (n-1/10, n+1/10)$. Then, for $x\in E_k$,  
$$
\bigg|\sum_n \frac{a_n G(n)}{x-n} \bigg|^2 \le
\sum_{n\notin I_k} \frac{|G(n)|^2}{(x-n)^2}\sum_{n\notin I_k} |a_n|^2  
+ \sum_{n\in I_k}|G(n)|^2\sum_{n\in I_k} \frac{|a_n|^2}{(x-n)^2} \lesssim g_k
$$
by (i). Note also that 
$\summ_n \frac{a_n^2}{x-n} \to 0$ when $x\to \infty$, $x\in E_k$. Thus,
$$
\int_{E_k} 
\bigg| f(x) \summ_n \frac{a_nG(n)}{x-n} \bigg| ^2 dx \lesssim g_k \int_{E_k} |f(x)|^2 dx = o(g_k),
$$ 
as $k\to\infty$, while, if $c\ne 0$, 
$$
\int_{E_k}  \bigg|G(x)\Big( c+ \summ \frac{a_n^2}{x-n}\Big)\bigg|^2 dx
\gtrsim \int_{E_k} |G(x)|^2 dx \gtrsim \frac{g_k}{d_k} \cdot d_k =  g_k
$$
by (ii). This contradiction shows that $c=0$ and \eqref{below} is proved.

Now, put 
$$
H(z) = \summ_n \frac{a_n G(n)}{z-n}, \qquad
h(z) = \summ_n \frac{a_n^2}{z-n}.
$$
Since $H\in \Hi$ and $\{\mathcal{K}_{t_n}\}$ is an orthogonal system in $\Hi$, we have  
$\Big\{\frac{H(t_n)}{\|\mathcal{K}_{t_n}\|_{\Hi}} \Big\} \in\ell^2$.
Also, $\{f(t_n)\}\in \ell^2$ by the classical Plancherel--P\'olya inequality. 
Equality \eqref{below} yields
$$
h(t_n)\frac{G(t_n)}{\|\mathcal{K}_{t_n}\|_{\Hi}}
= \frac{H(t_n)}{\|\mathcal{K}_{t_n}\|_{\Hi}} f(t_n) \in\ell^1.
$$

Denote by $\{\tilde t_j\}$ the sequence of all zeros of $M$ which belong to 
the intervals $(n, n+1)$ for $n\in N_k$, where the sets $N_k$ are constructed in Lemma \ref{lemm2}. 
By Lemma \ref{lemm3} we have ${\rm dist} (\{\tilde t_j\}, \Z) \ge \vep>0$. Hence, 
if $\tilde t_j \in (n, n+1)$, then 
$\|\mathcal{K}_{\tilde t_j}\|_\Hi^2/\|\mathcal{K}_{n + \frac{1}{2}}\|_\Hi^2$ is 
bounded from above and from below by some positive constants depending on $\vep$, 
since this is true for all the summands in their definitions (see \eqref{rep}).
By Lemma \ref{lemm2} and (iii) we have 
$$
|G(\tilde t_j)| \ge C \sqrt{\frac{g_k}{d_k}},
\qquad 
\|\mathcal{K}_{\tilde t_j}\|_\Hi \le C \sqrt{\frac{g_k}{d_k}}
$$
for any $\tilde t_j \in J_k$. We conclude that $h(\tilde t_j) \in \ell^1$. Also, 
since $|N_k| \ge d_k/2$ we have
$$
\summ_j \frac{1}{\tilde t_j}  = \summ_k \summ_{\tilde t_j \in J_k} \frac{1}{\tilde t_j}\asymp 
\summ_k \frac{|N_k|}{\rho_k} \asymp \summ_k \frac{d_k}{\rho_k} =\infty.
$$
Applying Lemma \ref{lem1} to $h(z) = \sum_{n\in \Z}\frac{|a_n|^2}{z-n}$ 
(note that we do not exclude the case that some of $a_n$ are zero) 
and $\tilde t_j$ in place of $\gamma_n$ we conclude that 
$a_n \equiv 0$. This contradiction proves the theorem.
\end{proof}

\begin{remark}
{\rm Formally, Theorems \ref{main1} and \ref{main2} apply to different classes of functions $G$, since
condition (iii) of Theorem \ref{main2} is inconsistent with the existence of any roots of $G$ on $I_k$
(condition (c) in Theorem \ref{main1}). However, in Theorem \ref{main1} we may assume that 
$\Lambda \subset (\R + i) \cup (\R - i)$ and $\Lambda = \bar \Lambda$ keeping all other  
conditions. Thus there exists a class of functions $G$ for which hereditary completeness  
depends only on finiteness of the logarithmic length of the intervals $I_k$. }
\end{remark}
{
\begin{remark}
{\rm We can allow some functions $G$ with divergent sum $\sum_{n\in \Z} \frac{G^2(n)}{n}$ with the following modification of the above method: we consider the function $M(x) = \sum_{n\in \Z} G^2(n)\left (\frac{1}{x-n} + \frac{n}{n^2+1}\right)$ and on the interval $I_k$ we will consider the points $\lambda_n\in (n, n+1)$ which are the solution to the equation 
\begin{equation}
M(x) = \summ_{n \le \pho_k + d_k} \frac{nG^2(n)}{n^2+1}.
\end{equation}

Although normalized reproducing kernels are not an orthonormal sequence anymore they come in big groups of pairwise orthogonal kernels corresponding to one interval $I_k$.  Thus one can still prove that they form a Riesz sequence by examining the Gram matrix and Riesz sequence is sufficient for our proof. This in particular allows us to consider $|G|\asymp 1$ on $I_k$ and (with slight modifications) even non-lacunary case $|G|\asymp 1$ on $\R$.   }
\end{remark}
}
\bigskip

\section{A remark on exponential parts of incomplete mixed system}
\label{s4}

Assume that $\{e^{i\lambda t}\}_{\lambda\in \Lambda}$ is a nonhereditary complete system 
of exponentials and the system 
$\{K_{\lambda}\}_{\lambda\in \Lambda_1} \cup \big\{\frac{G(z)}{z-\lambda} \big\}_{\lambda\in \Lambda_2}$
is incomplete for some partition $\Lambda = \Lambda_1\cup \Lambda_2$. We have seen in Section \ref{exa}
that $\Lambda$ can be a sufficiently small perturbation of integers. However, the system must also have 
a certain irregularity. We will show that the exponential part $\{e^{i\lambda t}\}_{\lambda\in \Lambda_1}$
cannot be a part of a Riesz basis of exponentials with some additional properties. E.g., 
$\Lambda_1$ cannot be a subset of $\Z$.

\begin{theorem}
\label{main3}
Let $\{e^{i\lambda t}\}_{\lambda\in \Lambda}$  
be a complete and minimal system in $L^2(-\pi, \pi)$, $\Lambda\subset \R$.
Assume that $\Lambda_1 \subset \Lambda$ and there exists $\tilde \Lambda_2 \subset \R$ such that
$\{e^{i\lambda t}\}_{\lambda\in \Lambda_1 \cup \tilde \Lambda_2}$ is a Riesz basis in $L^2(-\pi, \pi)$
whose generating function $F$ satisfies $|F'(\zeta)|\lesssim 1$, $\zeta\in \mathcal{Z}_F$. 
Then the system 
$$
\{e^{i\lambda t}\}_{\lambda\in \Lambda_1} \cup \{w_\lambda\}_{\lambda\in \Lambda_2},
$$
where $\Lambda_2 = \Lambda\setminus\Lambda_1$
and $\{w_\lambda\}$ is the system biorthogonal to $\{e^{i\lambda t}\}_{\lambda\in \Lambda}$,
is complete in $L^2(-\pi, \pi)$. 
\end{theorem}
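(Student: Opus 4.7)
Pass to the Paley--Wiener picture of Section~\ref{pwss}: the system $\{e^{i\lambda t}\}_{\Lambda}$ becomes $\{K_{\lambda}\}_{\Lambda}\subset PW_{\pi}$, the biorthogonal becomes $\{G_{\lambda}\}$ with $G_{\lambda}(z)=G(z)/(G'(\lambda)(z-\lambda))$, and $\{e^{i\mu t}\}_{\Lambda_{1}\cup\tilde\Lambda_{2}}$ becomes a Riesz basis $\{K_{\mu}\}_{\Lambda_{1}\cup\tilde\Lambda_{2}}$ of $PW_{\pi}$ with generating function $F$. The Riesz basis property forces $|F'(\mu)|\gtrsim 1$, so together with the hypothesis $|F'(\mu)|\lesssim 1$ one has $|F'(\mu)|\asymp 1$. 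Assume for contradiction the mixed system is incomplete and pick $f\in PW_{\pi}\setminus\{0\}$ orthogonal to all its members. Then $f$ vanishes on $\Lambda_{1}$, and since $\Lambda_{1}\cup\tilde\Lambda_{2}$ is a complete interpolation sequence we factor
\[
f(z)=F(z)\,S(z),\qquad S(z)=\summ_{\mu\in\tilde\Lambda_{2}}\frac{f(\mu)}{F'(\mu)(z-\mu)},
\]
with $\{f(\mu)\}_{\mu\in\tilde\Lambda_{2}}\in\ell^{2}$ and $\|f\|^{2}\asymp\summ_{\mu}|f(\mu)|^{2}>0$.

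Using the sampling identity $\langle f,g\rangle=\summ_{n\in\Z}f(n)\overline{g(n)}$ valid on $PW_{\pi}$, orthogonality to $G_{\lambda}$, $\lambda\in\Lambda_{2}$, reduces to $H(\lambda)=0$, where
\[
H(z)=\summ_{n\in\Z}\frac{f(n)G(n)}{z-n}, \qquad L(z)=\summ_{n\in\Z}\frac{f(n)^{2}}{z-n}
\]
(both series convergent off $\Z$ by Cauchy--Schwarz together with $G\in L^{2}(\R,\frac{dx}{1+x^{2}})$). I would now follow the functional-identity scheme from the proof of Theorem~\ref{main2}: the residues of $fH$ and $GL$ at every $n\in\Z$ agree (both equal $f(n)^{2}G(n)$); both functions vanish on $\Lambda$ (on $\Lambda_{1}$ because $f$ does, on $\Lambda_{2}$ because $H$ does); and both are of exponential type $\pi$. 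Therefore $(fH-GL)/G=:Q$ is entire of zero exponential type. The completeness-minimality of $\{K_{\lambda}\}_{\Lambda}$ forbids $Q$ from having any zero (else $GQ/(z-\zeta)$ would be a nonzero element of $PW_{\pi}$ vanishing on $\Lambda$), so $Q$ is a nonvanishing entire function of zero type, hence a constant $c$. The growth/integration estimate that eliminates $c\ne 0$ in Theorem~\ref{main2}'s proof carries over, yielding $fH=GL$.

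Substituting $f=FS$ gives $F(z)\,S(z)\,H(z)=G(z)\,L(z)$. The remaining goal is to show $f(\mu)=0$ for every $\mu\in\tilde\Lambda_{2}$; then $f$ would vanish on the complete interpolation sequence $\Lambda_{1}\cup\tilde\Lambda_{2}$, forcing $f\equiv 0$ against $\|f\|>0$. The natural route, mirroring Lemmas~\ref{lemm2}--\ref{lemm3} and Proposition~\ref{lem1}, is to construct a separated auxiliary sequence $\{t_{j}\}\subset\R$ of zeros of an appropriate meromorphic combination built from $F,G,H,L$ at which the identity, together with $|F'(\mu)|\asymp 1$, forces $\{h(t_{j})\}\in\ell^{1}$ for $h(z)=\summ_{n}f(n)^{2}/(z-n)$, while the Riesz basis geometry of $\tilde\Lambda_{2}$ supplies the density $\summ_{j}1/|t_{j}|=\infty$; Proposition~\ref{lem1} then forces $f(n)\equiv 0$ and the contradiction follows.

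The hardest part is this final step: locating the separated sequence $\{t_{j}\}$ and verifying both the $\ell^{1}$-summability of $\{h(t_{j})\}$ and the divergence $\summ 1/|t_{j}|=\infty$. The two-sided bound $|F'(\mu)|\asymp 1$ supplied by the Riesz basis hypothesis should play the role that the logarithmic-length condition $\summ d_{k}/\rho_{k}=\infty$ plays in Theorem~\ref{main2}, providing the geometric regularity of the complementary zero set needed to make Proposition~\ref{lem1} apply.
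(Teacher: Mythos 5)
There is a genuine gap, and it sits exactly where the hypothesis $|F'(\zeta)|\lesssim 1$ has to do its work. First, after deriving $fH-GL=cG$ you assert that ``the growth/integration estimate that eliminates $c\ne 0$ in Theorem~\ref{main2}'s proof carries over.'' It does not: that estimate used conditions (i) and (ii) of Theorem~\ref{main2} (in particular the pointwise lower bound $\sqrt{g_k/d_k}\le C|G(x)|$ on $I_k$), which are not hypotheses of Theorem~\ref{main3}; here $G$ is an arbitrary generating function of a complete and minimal system and no such lower bounds on $\int|G|^2$ over designated intervals are available. Second, and decisively, your final step is only a hope: you neither construct the separated sequence $\{t_j\}$ nor verify $\{h(t_j)\}\in\ell^1$ or $\sum_j 1/|t_j|=\infty$, and under the hypotheses of Theorem~\ref{main3} there is no lacunary-interval structure for $G$ that would let a Lemma~\ref{lemm2}/\ref{lemm3}-type construction go through. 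So the proposal stops before the hypothesis on $F'$ is ever used; the remark that $|F'|\asymp 1$ ``should play the role'' of the logarithmic-length condition is not a mechanism.

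The paper's proof avoids Proposition~\ref{lem1} entirely and is much softer. Since completeness is insensitive to passing to an equivalent norm, one takes the annihilating function $f$ orthogonal to the mixed system in the inner product of the auxiliary space $\Hi$ in which $\bigl\{\frac{F(z)}{F'(\zeta)(z-\zeta)}\bigr\}_{\zeta\in\mathcal{Z}_F}$ is an orthonormal basis, so that the coefficients are $c_\zeta=f(\zeta)$. Orthogonality to the reproducing kernels of $\Hi$ at $\Lambda_1$ gives $c_\zeta=0$ for $\zeta\in\Lambda_1$, and orthogonality to $G_\lambda$, $\lambda\in\Lambda_2$, written in these coordinates gives $F(z)\sum_{\zeta\in\tilde\Lambda_2}\frac{G(\zeta)\overline{c_\zeta}}{z-\zeta}=G(z)T(z)$ with $T$ of zero exponential type; the same ``$T$ has no zeros'' argument you used yields $T\equiv c$. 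Evaluating at $\zeta\in\tilde\Lambda_2$ (after first perturbing $\tilde\Lambda_2$ so that it avoids $\Lambda_2$, hence $G(\zeta)\ne0$ there) gives $F'(\zeta)\overline{c_\zeta}=c$, so either $c\ne0$ and $|c_\zeta|=|c|/|F'(\zeta)|\gtrsim1$, contradicting $(c_\zeta)\in\ell^2$, or $c=0$ and $c_\zeta\equiv0$, i.e.\ $f=0$. Your integer-sampled functions $H$ and $L$ cannot see the basis coefficients $f(\mu)/F'(\mu)$, which is why your route stalls; to repair it you would essentially have to switch to the Riesz-basis coordinates as above. (Two minor points: use $\overline{f(n)}$ in $H$ and $|f(n)|^2$ in $L$ so the residues match for complex-valued $f$, and carry out the initial perturbation of $\tilde\Lambda_2$ off $\Lambda_2$.)
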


\begin{proof}
Since Riesz bases of exponentials are stable under small perturbations (even in Euclidean metric)
we can perturb slightly $\tilde \Lambda_2$ so that $\Lambda_2\cap \tilde \Lambda_2 = \emptyset$ 
and still $|F'(\zeta)|\lesssim 1$, $\zeta\in \mathcal{Z}_F$. We also assume in what follows that 
$\Lambda_2$ and $\tilde \Lambda_2$ are infinite (otherwise it is well known that the 
corresponding mixed system is complete).

We pass again to the equivalent problem for the Paley--Wiener space $PW_\pi$.
Since $\{K_\zeta\}_{\zeta\in \ZZ}$, $\ZZ = \ZZ_F$, is a Riesz basis, its biorthogonal system 
$\big\{\frac{F(z)}{F'(\zeta)(z-\zeta)}\big\}_{\zeta\in \ZZ}$ also is a Riesz basis.
Consider the Hilbert space
$$
\Hi = \Big\{ f(z) = \sum_{\zeta\in \ZZ} c_\zeta \frac{F(z)}{F'(\zeta)(z-\zeta)}: (c_\zeta)\in \ell^2\Big\}
$$
with the norm $\|f\|_{\Hi} = \|(c_\zeta)\|_{\ell^2}$. Then $\Hi$ coincides with $PW_\pi$ 
with equivalence of norms and the system $\big\{\frac{F(z)}{F'(\zeta)(z-\zeta)}\big\}_{\zeta\in \ZZ}$ 
is an orthonormal basis of reproducing kernels in $\Hi$ (note that $c_\zeta = f(\zeta)$,  $f\in \Hi$).

Assume now that the system
$$
\{K_\lambda\}_{\lambda\in \Lambda_1} \cup \{G_\lambda\}_{\lambda\in \Lambda_2},\qquad
G_\lambda(z) = \frac{G(z)}{G'(\lambda)(z-\lambda)},
$$
is not complete in $PW_\pi$. This means that there exists a nonzero function $f\in PW_\pi$
such that $f|_{\Lambda_1} = 0$ and
$$
f\notin  \overline{{\rm Span}}_{PW_\pi}\big\{ G_\lambda: \ \lambda\in \Lambda_2  \big\} = 
\overline{{\rm Span}}_{\Hi}\big\{ G_\lambda: \ \lambda\in \Lambda_2  \big\}. 
$$ 
Note that the system $\{G_\lambda\}_{\lambda\in \Lambda}$ is biorthogonal 
also to the system of reproducing kernels
$\{\tilde K_\lambda\}_{\lambda\in \Lambda}$ of $\Hi$. Thus, there exists a function $f\in \Hi=PW_\pi$
such that 
$$
f\perp \{\tilde K_\lambda\}_{\lambda\in \Lambda_1} \cup \{G_\lambda\}_{\lambda\in \Lambda_2}
$$
with respect to the inner product of $\Hi$. Let 
$f(z) = \sum_{\zeta\in \ZZ} c_\zeta \frac{F(z)}{F'(\zeta)(z-\zeta)}$. Recall that $\ZZ = \Lambda_1\cup\tilde 
\Lambda_2$ whence $c_\zeta = f(\zeta) = 0$, $\zeta\in\Lambda_1$. Also,
$$
\big\langle G_\Lambda, f\big\rangle_{\Hi} = 
\frac{1}{G'(\lambda)} \sum_{\zeta\in \tilde \Lambda_2} \frac{G(\zeta) \overline c_\zeta}{\zeta-\lambda} =0, \qquad \lambda_\in \Lambda_2,
$$
and so 
$$
F(z) \sum_{\zeta\in \ZZ} \frac{G(\zeta) \overline c_\zeta}{z-\zeta} = G(z)T(z)
$$
for some entire function $T$ of zero exponential type. Since
$\big\{\frac{G(\zeta)}{|\zeta|+1}\big\} \in \ell^2$, the left-hand side of the above equality 
belongs to $PW_\pi +zPW_\pi$. If $T$ has at least one zero, say $z_0$, then $G\frac{T}{z-z_0} \in PW_\pi$,
a contradiction to the fact that $G$ is the generating function of a complete sequence. Thus, $T=c$ for some 
$c\in \mathbb{C}$. Comparing the values at $\zeta\in \tilde \Lambda_2$ we get
$F'(\zeta) G(\zeta) \overline c_\zeta = cG(\zeta)$ and so $|c_\zeta| = |F'(\zeta)|^{-1} |c| \gtrsim 1$, 
$\zeta\in \tilde \Lambda_2$, a contradiction.                                                   
\end{proof}

\begin{question} Is the condition $|F'(\zeta)|\lesssim 1$, $\zeta\in \mathcal{Z}_F$, essential?
If it is not satisfied, then it is possible that $\{|F'(\zeta)|^{-1}\}_{\zeta \in \tilde \Lambda_2}\in \ell^2$
when $\tilde \Lambda_2$ is sufficiently sparse. Writing $G=G_1G_2$, $F=G_1F_2$ and
$\overline c_\zeta = 1/F'(\zeta)$  we get
$$
\sum_{\zeta\in \tilde\Lambda_2} \frac{G_2(\zeta)}{F_2'(\zeta)(z-\zeta)} = \frac{G_2(z)}{F_2(z)}.
$$
This equation is possible for some $G_2$ which are ``smaller'' then $S_2$ but is it compatible
with the condition that $G$ is the generating function for a complete and minimal system?
\end{question}

\end{document}